\newtheorem{thm}{Theorem}
 \newtheorem{claim}{Claim}
 \newtheorem{prob}{Problem}
\newtheorem*{thm*}{Theorem}
\begin{document}

\title[Neighborhood equivalence for multibranched surfaces]
{Neighborhood equivalence for multibranched surfaces in 3-manifolds}

\author{Kai Ishihara}
\thanks{The first author is partially supported by JSPS KAKENHI Grant Numbers 26310206, 16H03928, 16K13751, 17K14190 and 17H06463.}
\address{
Faculty of Education \newline
\indent Yamaguchi University, 1677-1 Yoshida, Yamaguchi-shi, Yamaguchi, 753-8511, Japan}
\email{kisihara@yamaguchi-u.ac.jp}

\author{Yuya Koda}
\thanks{The second author is partially supported by JSPS KAKENHI Grant
 Numbers 15H03620, 17K05254, 17H06463, and JST CREST Grant Number JPMJCR17J4.}
\address{
Department of Mathematics \newline
\indent Hiroshima University, 1-3-1 Kagamiyama, Higashi-Hiroshima, 739-8526, Japan}
\email{ykoda@hiroshima-u.ac.jp}

\author{Makoto Ozawa}
\thanks{The third author is partially supported by JSPS KAKENHI Grant Numbers  26400097, 16H03928, 17K05262.}
\address{
Department of Natural Sciences, Faculty of Arts and Sciences \newline
\indent Komazawa University, 1-23-1 Komazawa, Setagaya-ku, Tokyo, 154-8525, Japan}
\email{w3c@komazawa-u.ac.jp}

\author{Koya Shimokawa}
\thanks{The last author is partially supported by JSPS KAKENHI Grant Numbers 16K13751, 16H03928, 17H06460, 17H06463, and JST CREST Grant Number JPMJCR17J4.}
\address{
Department of Mathematics \newline
\indent Saitama University, 255 Shimo-Okubo, Sakura-ku, Saitama-shi, Saitama, 338-8570, Japan}
\email{ kshimoka@rimath.saitama-u.ac.jp}


\keywords{link, punctured sphere, cabling conjecture, incompressible surface, essential surface, multibranched surface}

\begin{abstract}
A multibranched surface is a 2-dimensional polyhedron without vertices. 
We introduce moves for multibranched surfaces embedded in a 3-manifold, which connect 
any two multibranched surfaces with the same regular neighborhoods in finitely many steps. 
\end{abstract}

\maketitle


\section*{Introduction}

In \cite{Suzuki70}, Suzuki defined the notion of {\it neighborhood equivalence} for pairs of
polyhedra $P \subset M$.
Two pairs of polyhedra $P \subset M$ and $P' \subset M'$ are said to be 
neighborhood equivalent
if there exists an orientation preserving homeomorphism of $M$ to $M'$
which takes $N(P)$ to $N(P')$.
It was shown by Makino-Suzuki \cite{MS95} that two spatial graphs (i.e. graphs 
embedded in 3-manifolds) $\Gamma$ and $\Gamma'$ are
neighborhood equivalent if and only if $\Gamma'$ is obtained from $\Gamma$
by a finite sequence of edge-contractions and vertex-expansions.
This shows that an equivalence class of handlebodies embedded in a 3-manifold 
can be identified with that of spatial graphs modulo edge-contractions
and vertex-expansions.
In the present paper, we show an analogous result on 
the neighborhood equivalence for multibranched surfaces. 

A 2-dimensional polyhedron $X$ is said to be a {\it multibranched surface} 
if each point $x$ of $X$ has a regular neighborhood homeomorphic to 
$C_{d_x}
\times [0,1]$, where $C_{d_x}$ is the cone over $d_x$ points. 
The set $B$ of points with $d_x \geq 3$ is a disjoint union of circles, 
called {\it branch loci}, and $B$ separates $X$ into (genuine) surfaces, called {\it regions}. 
In this paper, we always assume that $X$ does not have disk regions. 
This object naturally arises both in the the study of essential surfaces in the exterior of links, e.g. \cite{EO18}, 
and that of tricontinuous or poly-continuous patterns in material science(see the final paragraph of this section).  

Let $X$ be a multibranched surface embedded in a 3-manifold $M$. 
If there exists an annulus region $A$ of $X$ connecting two different branched loci, 
we obtain a new multibranched surface $X'$ with the 
same regular neighborhood as $X$ by shrinking $A$ into the core circle 
(provided $A$ satisfies a certain condition determined by the combinatorial structure of $X$. 
See Section \ref{Sec:Multibranched surfaces and their moves} for the details.)  
We call this operation the {\it IX-move along $A$}. 
The IX-move along a M\"{o}bius band region can be defined as well in a similar way. 
The condition that actually enable us to perform the IX-move along a given 
annulus or M\"{o}bius band region can be described explicitly 
in terms the combinatorial structure of $X$. 
An inverse operation of the IX-move is called an {\it {XI}-move}. 
By the construction, it is clear that any two multibranched surfaces in 
$M$ connected by a sequence of IX- and XI-moves 
have the same regular neighborhood. 
Our main theorem below claims that these moves are already sufficient to connect 
any two multibranched surfaces with the same regular neighborhood.  

\begin{thm*}[Theorem \ref{thm:IXXI}]
Let $X,X'$ be multibranched surfaces in an orientable $3$-manifold $M$, 
and let $N,N'$ be their regular neighborhoods respectively. 
If $N$ is isotopic to $N'$ in $M$, then $X$ is transformed into $X'$ by a finite sequence of IX-moves, XI-moves and isotopies.
\end{thm*}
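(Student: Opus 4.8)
By hypothesis there is an ambient isotopy of $M$ taking $N$ onto $N'$; applying its inverse to $X'$---an isotopy, hence one of the permitted moves---we may assume from the outset that $N=N'$ as subsets of $M$. Fix a closed regular neighborhood $V(X)=N(B)$ of the branch loci $B$ of $X$ inside $N$. Each component $V$ of $V(X)$ is a solid torus whose boundary torus is cut by the $d_V$ longitudes (the traces of the sheets meeting along that branch locus) into $d_V$ annuli, and $N\setminus\operatorname{int}V(X)$ is a disjoint union of $I$-bundles over the regions of $X$. This \emph{branch--region decomposition} of $N$, together with the analogous decomposition coming from $X'$, are the two pieces of data that must be reconciled. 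I first record the effect of the moves on this decomposition: an IX-move along an annulus region $A$ joining two distinct branch loci $V_{1},V_{2}$ amalgamates $V_{1}$, $V_{2}$ and the solid torus $A\times I$ into a single solid-torus branch piece, so that, on the level of the decomposition, it is precisely an edge-contraction; a XI-move is the inverse vertex-expansion.

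The plan is to decrease, by IX-moves applied to $X$ and to $X'$, the discrepancy between the two branch--region decompositions until they coincide. To this end I would isotope the branch loci $B'$ of $X'$ into normal position with respect to the decomposition induced by $X$: make $B'$ transverse to the horizontal boundaries of the $I$-bundle pieces and to the meridian disks of the branch solid tori, and minimize the total number of intersection points over the isotopy class. Since each $I$-bundle over a region has incompressible horizontal boundary (here the standing assumption that $X$ has no disk regions is used), an innermost-disk and outermost-arc analysis in the product pieces removes every inessential intersection, after which $B'\cap(R\,\widetilde{\times}\,I)$ consists only of $I$-fibres together with arcs and circles that can be pushed into the vertical boundary annuli.

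The heart of the argument is to show that, whenever the two decompositions still disagree, an \emph{admissible} IX-move is available that reduces the discrepancy. Concretely, a component of $B'$ running through a branch solid torus of $X$, or an annulus region of $X$ lying innermost inside a branch piece of $X'$, should produce an annulus region between two distinct branch loci satisfying the combinatorial condition of Section~\ref{Sec:Multibranched surfaces and their moves}, and hence an IX-move merging those loci. Each such move strictly lowers the complexity $c(\,\cdot\,)=(\#\{\text{branch loci}\},\,-\chi(\text{regions}))$ ordered lexicographically, so the process terminates. Verifying that such an admissible move always exists---equivalently, ruling out linked or knotted configurations of $B$ and $B'$ in which every candidate annulus fails the admissibility condition---is the step I expect to be the main obstacle; it is here that orientability of $M$ and a careful bookkeeping of the parallel longitudes on each branch torus will be essential.

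Once the reductions terminate, both $X$ and $X'$ are IX-reduced and their branch--region decompositions coincide: the branch loci agree as the cores of the same solid tori in $N$, with the same degrees. It then remains to match the regions. Cutting $N$ along $B$, the region of $X$ and the corresponding region of $X'$ lying in a fixed $I$-bundle piece are properly embedded, incompressible and boundary-incompressible surfaces with the same boundary; by the classification of such surfaces in an $I$-bundle they are isotopic, rel boundary, to the horizontal section, hence to one another. Assembling these isotopies across all pieces yields an isotopy of $X$ onto $X'$. Reading the two chains of IX-reductions, one forwards and one backwards (the latter as XI-moves), finally exhibits the transformation of $X$ into $X'$ as a finite sequence of IX-moves, XI-moves and isotopies, as required.
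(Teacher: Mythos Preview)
Your proposal is a plan rather than a proof: you explicitly flag the central step---producing an admissible IX-move whenever the two decompositions still disagree---as ``the main obstacle'' and do not carry it out. That step is the whole difficulty. Your complexity $c=(\#\{\text{branch loci}\},-\chi)$ is a complexity of a \emph{single} surface and only guarantees that any sequence of IX-moves terminates; it says nothing about whether the terminal states of $X$ and $X'$ coincide. You would need to prove that an IX-reduced multibranched spine of $N$ is unique up to isotopy, and you give no mechanism for this. The normal-position discussion for $B'$ is also too coarse: knowing that $B'$ meets an $I$-bundle piece in fibres and boundary-parallel arcs does not by itself single out an annulus region of $X$ (or of $X'$) that is normal or quasi-normal, which is what admissibility requires.

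The paper sidesteps exactly this obstacle by running in the opposite direction. Rather than contracting with IX-moves, it first applies XI-moves \emph{maximally} to both surfaces, so that every branch locus becomes non-spreadable (tribranched and normal, or pure); it then suffices to connect two such maximally spread surfaces by IH-moves. The comparison is made not between the branch loci but between the two \emph{characteristic annulus systems} $\mathcal{A}_X,\mathcal{A}_{X'}\subset N$, using the explicit complexity $\bigl(E(\mathcal{A}_{X'}|\mathcal{A}_X),\,I(\mathcal{A}_{X'},\mathcal{A}_X)\bigr)$, where $I$ counts essential intersection loops and $E$ counts outermost subannuli of $\mathcal{A}_{X'}$. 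An outermost such subannulus $E$ is shown to lie in a \emph{normal} branch solid torus, which immediately names the adjacent region $S_1$ along which to perform an IH-move; a case analysis (with a Luo-style telescoping estimate in the one case where the intersection number can temporarily rise) then shows the complexity eventually drops. When it reaches zero the two annulus systems are disjoint, hence isotopic, and the surfaces agree. The point is that the outermost annulus \emph{hands you} the next move, so no existential search for an admissible contraction is needed.
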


When $d_x$ in the definition of a multibranched surface $X$ 
is at most 3 for each $x \in X$, $X$ is called a {\it tribranched surface}. 
The set of tribranched surfaces in a given 3-manifold $M$ is ``generic'', that is, it forms an open and dense subset 
in the space of all multibranched surfaces in a suitable sense. 
For a tribranched surface, an IX-move followed by an XI-move is called an {\it IH-move}. 
Clearly, an IH-move transforms a tribranched surface into another tribranched surface. 
We prove the above theorem showing that 
IH-moves are sufficient to connect 
any two tribranched surfaces in $M$ with the same regular neighborhood (Theorem \ref{thm:IH}).  

On finite calculi for certain ``generic'' subspaces of 3-manifolds, the following facts are well-known. 
\begin{enumerate}
\item (Luo \cite{Luo97}, Ishii \cite{Ishii08})  
Two trivalent graphs embedded in a 3-manifold has isotopic 
neighborhoods if and only if they are connected by a sequence of {\it IH-moves} (for spatial trivalent graphs) and isotopy. 
(This is a version of \cite{MS95} for trivalent spatial graphs.) 
\item (Matveev \cite{Matveev88}, Piergallini \cite{Piergallini88}) 
Two simple polyhedra embedded in a 3-manifold has isotopic 
neighborhoods if and only if they are connected by a sequence of {\it $2 \leftrightarrow 3$ moves}, {\it $0 \leftrightarrow 2$ moves} moves and isotopy.  
\end{enumerate}
Our Theorem \ref{thm:IH} can be regarded as a 2-dimensional analogue of (1). 
The set of tribranched surfaces forms a subclass of the set of 
simple polyhedra studied in (2). 
Theorem \ref{thm:IH} can also be regarded as a 
study of moves closed in that subset.

In Section \ref{sec:tricontinuous} we discuss an application to study of tricontinuous or poly-continuous patterns.
Sructures called bicontinuous or tricontinuous (more generally poly-continuous) patterns appear, for example, in block copolymer materials.
Tangled networks (labyrinths) in $\mathbb R^3$ are used in the study of poly-continuous structures in \cite{Hyde2000, Hyde2009}.
In this paper we will focus our attention to poly-continuous patterns defined by triply periodic multibranched surfaces in $\mathbb R^3$.
We will give a relation of such poly-continuous patterns associated to a given tangled network (Theorem \ref{thm:poly}).

\section{Multibranched surfaces and their moves}
\label{Sec:Multibranched surfaces and their moves}

Let $X$ be a multibranched surface with brach loci $B=B_1\cup\cdots\cup B_m$ and regions $S=S_1\cup \cdots\cup S_n$, 
where $S$ is a (possibly disconnected or/and non-orientable) compact surface without disk components such that each component $S_j$ ($j=1,\ldots,n$) has a non-empty boundary.
Each point $x$ in $\partial S$ is identified with a point $f(x)$ in $B$ by a covering map $f:\partial S\to B$,
where $f|_{f^{-1}(B_i)}:f^{-1}(B_i)\to B_i$ is a $d_i$-fold covering ($d_i>2$). 
Note that $f^{-1}(B_i)$ might be disconnected.
We call $d_i$ the {\emph degree} of $B_i$.
We say that $B_i$ is {\em tribranched} or a {\em tribranch locus} if $d_i=3$. 
For each component $C$ of $\partial S$, the {\em wrapping number} of $C$ is $w_{C}$ if $f|_{C}$ is a $w_C$-fold covering for the branch locus $f(C)$.
Suppose $X$ is embedded in an orientable $3$-manifold $M$.
By \cite{MO17}, then for each branch locus $B_i$ of $X$, the wrapping number of all component of $f^{-1}(B_i)$ is a divisor of $d_i$. 
We call the divisor $w_i$ the {\em wrapping number} of $B_i$. 
We say a branch locus $B_i$ is {\em normal} (resp. {\em pure}) if $w_i=1$ (resp. $d_i=w_i$).
Note that if $B_i$ is normal (resp. pure), then 
$f^{-1}(B_i)$ consists of $d_i$ components (resp. one component) of $\partial S$.
For each annulus region $A$ of $X$, exactly one of the following 
(1)--(4) holds: 

\begin{itemize}
\item[(1)] $\partial A$ consists of two normal branch loci, 
\item[(2)] $\partial A$ consists of an normal branch locus and an unnormal branch locus, 
\item[(3)] $\partial A$ consists of two unnormal branch loci, or 
\item[(4)] $\partial A$ consists of one branch locus.
\end{itemize}
In the cases of (1), (2), (3), (4), the annulus region $A$ is called a {\em normal annulus region}, {\em quasi-normal annulus region}, 
{\em unnormal annulus region}, {\em closing annulus region}, respectively.
We say a M\"{o}bius-band region is {\em normal} if the boundary is an normal branch locus.
%
We say that a branch locus is {\em non-spreadable}
if it is normal and tribranched, or pure, otherwise we say that it is {\em spreadable}. 
We say that a region is {\em maximally spread} if each boundary component is non-spreadable.
Let $A$ be an normal or quasi-normal annulus region of $X$, or an normal  M\"{o}bius-band region of $X$.
A {\em $($$2$-dimensional$)$ IX-move} along $A$ is an operation shrinking 
$A$ into the core circle. 
By this move, two branch loci become one spreadable
branch locus if $A$ is an normal or quasi-normal annulus region, 
and one normal branch locus becomes one unnormal and non-pure (spreadable) branch locus if $A$ is an normal  M\"{o}bius-band region. 
A {\em $($$2$-dimensional$)$ XI-move} at a spreadable 
branch locus is a reverse operation of an IX-move.
See Figure \ref{fig:IXXI}. 

\begin{figure}[h]
\begin{center}
\begin{minipage}{400pt}
\includegraphics[width=\textwidth]{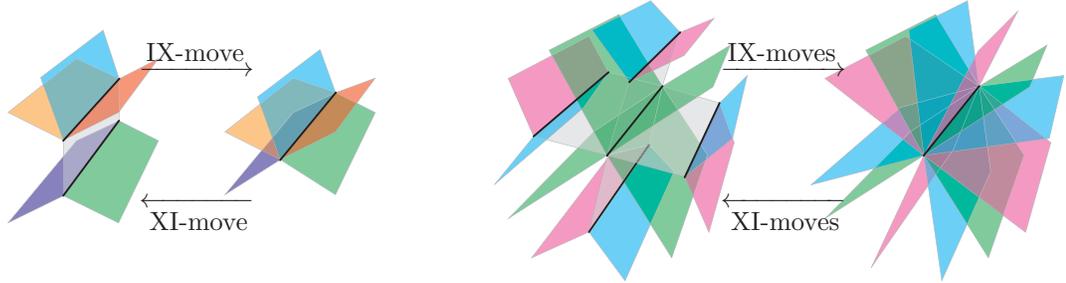}

\begin{picture}(400,0)(0,0)
\put(50,90){$\xrightarrow[]{\mbox{IX-move}}$}
\put(50,40){$\xleftarrow[\mbox{XI-move}]{}$}
\put(270,90){$\xrightarrow[]{\mbox{IX-moves}}$}
\put(270,40){$\xleftarrow[\mbox{XI-moves}]{}$}
\end{picture}
\end{minipage}
\caption{IX-moves and XI-moves: an IX-move along an normal annulus region and an XI-move at an normal branch locus (left), 
an IX-move along a quasi-normal annulus region and an XI-move at an unnormal branch locus (right).}
\label{fig:IXXI}
\end{center}
\end{figure}

By an XI-move, a new normal or quasi-normal annulus region, or a new normal M\"{o}bius band region arises.
An IX-move is uniquely determined up to isotopy for a given normal or 
quasi-normal annulus region, or a given normal  M\"{o}bius band region. 
An XI-move, however, is not uniquely determined for an  spreadable 
branch locus. 
 Note that a branch locus admits an XI-move if and only if 
it is spreadable. 
Suppose the region $A$ above is maximally spread. 
Note that each component (branch locus) of $\partial A$ does not admit XI-moves. 
Perform the IX-move along $A$. 
Then the resulting new branch locus admits exactly two XI-moves. 
One is the reverse operation of the IX-move.  
A {\em $($$2$-dimensional$)$ IH-move} along $A$ is a composition of the IX-move along $A$ and the other XI-move at the new branch locus,
see Figure \ref{fig:IH}. 

\begin{figure}[h]
\begin{center}
\begin{minipage}{400pt}
\includegraphics[width=\textwidth]{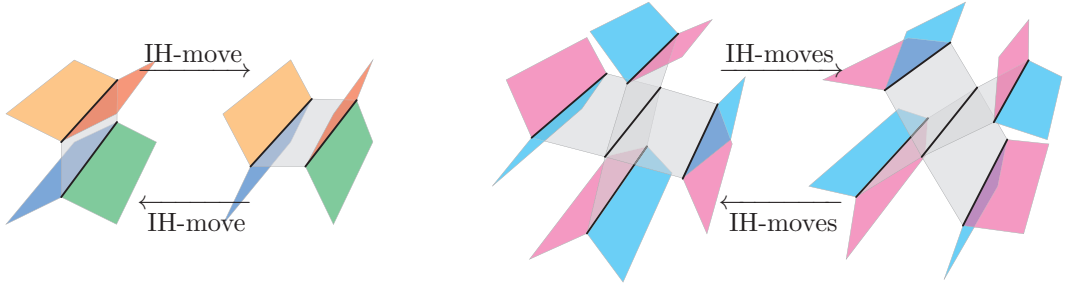}
\begin{picture}(400,0)(0,0)
\put(50,90){$\xrightarrow[]{\mbox{IH-move}}$}
\put(50,40){$\xleftarrow[\mbox{IH-move}]{}$}
\put(270,90){$\xrightarrow[]{\mbox{IH-moves}}$}
\put(270,40){$\xleftarrow[\mbox{IH-moves}]{}$}
\end{picture}
\end{minipage}
\caption{IH-moves: IH-moves along normal maximally spread 
annulus (or M\"{o}bius band) regions (left), 
IH-moves along quasi-normal maximally spread 
annulus regions (right).}
\label{fig:IH}
\end{center}
\end{figure}

These moves invariant regular neighborhoods of the mutibranched surfaces up to isotopy.
The following is our main theorem, which implies that the converse is also true.
\begin{thm}\label{thm:IXXI}
Let $X,X'$ be multibranched surfaces in an orientable $3$-manifold $M$, 
and let $N,N'$ be their regular neighborhoods respectively. 
If $N$ is isotopic to $N'$ in $M$, then $X$ is transformed into $X'$ by a finite sequence of IX-moves, XI-moves and isotopies.
\end{thm}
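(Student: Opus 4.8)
The plan is to reduce the statement to a uniqueness problem for spines of a fixed $3$-manifold and then to a comparison of tribranched surfaces. By hypothesis there is an isotopy of $M$ carrying $N$ onto $N'$, and by the isotopy extension theorem we may take it to be ambient; call it $\phi_t$, so $\phi_1(N)=N'$. Since a regular neighborhood is natural under ambient isotopy, $\phi_1$ carries $X$ to a multibranched surface $X_1\subset N'$ with $N(X_1)=\phi_1(N(X))=N'$. Replacing $X$ by $X_1$, it therefore suffices to treat the case $N=N'$: I must show that any two multibranched surfaces $X,X'$ which are spines of one fixed compact $3$-manifold $N$, in the sense that $N$ collapses to each of them and $N(X)=N(X')=N$, are related by IX-moves, XI-moves and isotopy. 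Because every such move preserves the ambient regular neighborhood up to isotopy, this reduction loses nothing.

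Next I would record the decomposition of $N$ determined by a spine: a disjoint union of solid tori $V_i=N(B_i)$, one per branch locus, together with the (possibly twisted) $I$-bundles $W_j=N(S_j)$ over the regions, glued to the $V_i$ along annuli in their boundary tori. My aim is to pass to the generic tribranched situation. Every \emph{spreadable} branch locus admits an XI-move, which peels off an annulus (or M\"{o}bius-band) region and lowers its degree toward $3$ or resolves its non-normality; applying such XI-moves repeatedly transforms $X$ into a form in which every \emph{non-pure} branch locus is normal and tribranched. The pure loci are non-spreadable and cannot be reduced this way, but each is rigid in that its neighborhood $V_i$ is a canonical solid torus carrying a single region wrapping $d_i$ times; I would match these pieces directly and carry out the comparison away from them. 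Here the divisibility result of \cite{MO17} on wrapping numbers is used to track the normal/pure/wrapping data through each XI-move and to guarantee that the spreading process genuinely terminates in the desired form. This reduces Theorem~\ref{thm:IXXI} to the statement that two tribranched spines of the same $N$ are connected by IH-moves and isotopy (Theorem~\ref{thm:IH}), which suffices because an IH-move is an IX-move followed by the other admissible XI-move, hence a composition of permitted moves.

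The heart of the argument is this tribranched comparison. I would put the two tribranched spines $X,X'$ of $N$ in general position, either with respect to one another or with respect to a fixed handle decomposition of $N$, so that their regions meet in a $1$-complex, and then use innermost-disk and outermost-arc exchanges to remove excess intersections and to match up their solid-torus pieces $V_i$ and $I$-bundle pieces $W_j$. The delicate point is that each such simplification must be realized \emph{within} the class of multibranched surfaces: in contrast to the Matveev--Piergallini calculus of \cite{Matveev88,Piergallini88}, whose moves pass through configurations with true vertices, I am not allowed to create vertices, so every exchange has to be effected by sliding a region across an annulus $V_i\cap W_j$ and by merging and re-splitting branch loci, which are precisely the operations encoded by IH-moves.

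I expect this vertex-free comparison to be the main obstacle, exactly because the ambient calculus guaranteeing connectivity of general spines leaves the multibranched class. The intended source of the needed rigidity is the characteristic (JSJ-type) decomposition of $N$ along its essential annuli and tori: I would argue that this decomposition canonically recovers the solid-torus cores (the branch loci) and the $I$-bundle pieces (the regions) up to isotopy and the region-sliding moves, so that the two tribranched spines differ only by a sequence of IH-moves. This is the $2$-dimensional analogue of the fact underlying the graph and trivalent-graph results (1) of the introduction, where the corresponding rigidity is simply the uniqueness of a spine of a handlebody up to IH-moves.
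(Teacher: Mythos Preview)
Your reduction to $N=N'$ and then to maximally spread (tribranched-or-pure) spines via repeated XI-moves is exactly what the paper does; the divisibility fact from \cite{MO17} and the observation that IH-moves are compositions of IX/XI-moves are used just as you say. So the architecture of the proof is right, and the problem is correctly isolated as Theorem~\ref{thm:IH}.

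Where you diverge, and where there is a genuine gap, is in the proposed attack on Theorem~\ref{thm:IH}. You suggest either intersecting the two spines $X,X'$ directly, or invoking a JSJ-type characteristic decomposition of $N$. Neither is what the paper does, and both are problematic. Intersecting the branched surfaces themselves produces a $1$-complex living inside singular loci, and it is unclear how to realize the resulting exchanges by IH-moves without creating vertices. The JSJ idea is more seriously off: the annulus decomposition of $N$ coming from a spine is \emph{not} canonical---for an annulus or M\"{o}bius-band region the two bounding annuli of its $I$-bundle piece are parallel in $N$, so JSJ cannot distinguish how the regions are grouped around the solid tori, which is exactly the ambiguity that IH-moves are meant to resolve. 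You would not recover the branch-locus solid tori from characteristic theory alone.

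The paper's approach is instead purely combinatorial on properly embedded annuli. To each spine $X$ it associates the \emph{characteristic annulus system} $\mathcal{A}_X$: one vertical annulus $\widehat{C}\times[-1,1]$ in $N$ for each boundary component $C$ of the regions. These annuli cut $N$ into the solid tori $N_i$ and $I$-bundles $N_{m+j}$. An IX-move deletes one or two annuli from $\mathcal{A}_X$; an XI-move inserts two parallel annuli into a solid torus piece. One then puts $\mathcal{A}_X$ and $\mathcal{A}_{X'}$ in minimal position, shows (by the innermost/outermost arguments you mention, but applied to annuli in $N$, not to the spines) that all intersections are essential loops, and introduces the lexicographic complexity $(E(\mathcal{A}_{X'}|\mathcal{A}_X),\,I(\mathcal{A}_{X'},\mathcal{A}_X))$, where $E$ counts outermost sub-annuli of $\mathcal{A}_{X'}$ cut off by $\mathcal{A}_X$. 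An outermost sub-annulus must sit in a \emph{normal} solid torus $N_1$; an IH-move along the adjacent region $S_1$ replaces the relevant annuli of $\mathcal{A}_X$ by new ones, and a short case analysis (normal M\"{o}bius band, normal annulus, quasi-normal annulus) together with a counting argument in the style of Luo~\cite{Luo97} shows the complexity strictly drops after finitely many such IH-moves. This Luo-type bookkeeping on an $n$-tuple of intersection numbers is the missing engine in your sketch.
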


\section{Proof of Theorem \ref{thm:IXXI}}
\label{Sec:Proof of Theorem {thm:IXXI}}

To prove Theorem \ref{thm:IXXI} we may assume that $N=N'$, namely $X'$ is in $N$ so that $N$ is also a regular neighborhood of $X'$. 
The {\em characteristic annulus system $\mathcal{A}_X$ 
$($resp. $\mathcal{A}_{X'})$ with respect to $X$} (resp. $X'$) 
is the system of mutually disjoint annuli properly embedded in $N$ 
consisting of $\widehat{C} \times [-1,1]$ for each component $C$ of $\partial S$, where 
$\widehat{C}$ is in Int($S$) and parallel to $C$ in $S$. 
The system $\mathcal{A}_X$ splits $N$ into pieces $N_i$ ($i=1,\ldots,m+n$), 
where $N_i$ is a regular neighborhoods of a branch locus $B_i$ of $X$ for $i\in \{1,\ldots,m\}$, and $N_{m+j}$ is homeomorphic to 
$S_j\times [-1,1]$ or $S_j\tilde{\times} [-1,1]$ according to whether $S_j$ is orientable or not for $j\in\{1,\ldots,n\}$. 
In other words, $N$ is obtained by attaching $\bigcup_{1\le j\le n}N_{m+j}$ 
to the union $\bigcup_{1\le i\le m}N_{i}$ of the solid tori  along $\partial S\times [-1,1]$.

An IX-move along a region $A$ of a multibranched surface $X$ in $N$ 
corresponds to an operation removing one or two annuli $\partial A\times [-1,1]$ from 
$\mathcal{A}_X$ according to whether $A$ is a (normal) M\"{o}bius-band region or a (normal/quasi-normal) annulus region.
On the other hand, an IX-move in $N$ corresponds to an operation adding 
two parallel annuli in a 
 solid torus $N_i$ ($i\in\{1,\ldots,m\}$) 
to $\mathcal{A}_X$. 
Note that each spreadable
branch locus of $X$ admits an XI-move.
By applying XI-moves to $X$ (resp. $X'$) maximally, we get a multibranched surface with the non-spreadable 
branch loci.
We call such a multibranched surface a 
{\em maximally spread surface}.
Theorem \ref{thm:IXXI} then follows from Theorem \ref{thm:IH} below. 
\begin{thm}\label{thm:IH}
Let $X$ and $X'$ be maximally spread surfaces
in $N$ such that $N$ is a regular neighborhood of each of $X$ and $X'$.
Then $X$ is transformed into $X'$ by a finite sequence of IH-moves and isotopies.
\end{thm}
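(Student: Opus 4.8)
The plan is to fix the ambient manifold $N$ and to compare the two characteristic annulus systems $\mathcal{A}_X$ and $\mathcal{A}_{X'}$ as properly embedded annuli in $N$. Recall that $\mathcal{A}_X$ splits $N$ into solid tori $N_1,\dots,N_m$ (neighborhoods of the branch loci, each meeting $\mathcal{A}_X$ in at least three annuli since $X$ is maximally spread) and $I$-bundles $N_{m+1},\dots,N_{m+n}$ over the regions, and that $X$ is recovered up to isotopy from this decomposition: a complementary piece that is a solid torus carrying at least three annuli of $\mathcal{A}_X$ on its boundary must be a branch-locus neighborhood, its core being a branch locus, since an $I$-bundle over a surface with at least three boundary circles is never a solid torus; the remaining pieces are the region $I$-bundles, with the regions as zero-sections. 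The same holds for $X'$ and $\mathcal{A}_{X'}$. It therefore suffices to transform $\mathcal{A}_X$ into $\mathcal{A}_{X'}$ by isotopies together with the annulus-system operations dual to IH-moves, since by the discussion preceding Theorem~\ref{thm:IXXI} an IH-move amounts, inside the solid torus obtained by fusing two branch-locus neighborhoods along a maximally spread connecting region, to deleting one pair of parallel annuli and re-inserting another pair in a different slot.

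First I would isotope $\partial\mathcal{A}_{X'}$ on $\partial N$ so as to minimize its intersection with $\partial\mathcal{A}_X$, and then isotope $\mathcal{A}_{X'}$ into general position, so that $\mathcal{A}_X\cap\mathcal{A}_{X'}$ is a disjoint union of circles and properly embedded arcs, chosen to minimize the number of components. Any intersection circle that is inessential on one of the two annulus systems bounds a disk inside that annulus; an innermost such disk lets me isotope the circle away, lowering the count, and this argument is internal to the annuli and hence does not require $N$ to be irreducible. An essential intersection circle lies in some complementary piece $N_i$ of $\mathcal{A}_X$, where I would invoke the classification of essential annuli: in a solid torus every essential annulus is vertical, and in an $I$-bundle it is vertical or horizontal. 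Matching the sub-annuli cut off by the circle against this classification exhibits a parallelism permitting an isotopy that reduces $|\mathcal{A}_X\cap\mathcal{A}_{X'}|$, and outermost-arc arguments dispose of the arc components similarly. After finitely many such reductions I expect to be able to assume that $\mathcal{A}_X$ and $\mathcal{A}_{X'}$ are disjoint.

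With the systems disjoint, each annulus of $\mathcal{A}_{X'}$ lies in a complementary piece of $\mathcal{A}_X$. An annulus lying in a region $I$-bundle is boundary-parallel there and can be removed or slid out by isotopy; the essential content is concentrated in the branch-locus solid tori $N_i$, where an annulus of $\mathcal{A}_{X'}$ is vertical and thus re-pairs the local sheets of $B_i$ in a new way. I would then induct on the number of annuli of $\mathcal{A}_{X'}$ not already coinciding with annuli of $\mathcal{A}_X$: each such vertical annulus, together with the maximally spread hypothesis forbidding stray XI-moves, forces the local configuration to be exactly one of the IH-move patterns recorded in Figures~\ref{fig:IXXI}--\ref{fig:IH}, so performing that IH-move on $X$ makes one more annulus of $\mathcal{A}_X$ agree with $\mathcal{A}_{X'}$. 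Because distinct branch-locus solid tori are disjoint, these moves can be carried out independently, and the induction terminates with $\mathcal{A}_X=\mathcal{A}_{X'}$, hence with $X$ isotopic to $X'$.

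The hard part will be this last step: verifying that when $\mathcal{A}_X$ and $\mathcal{A}_{X'}$ are disjoint but distinct, every elementary discrepancy is realized by a genuine IH-move between maximally spread surfaces, rather than by a reconfiguration that introduces a disk region, a spreadable locus, or an inadmissible wrapping number (and, should an essential intersection resist removal by isotopy alone in the previous step, that an IH-move strictly simplifies the configuration, so that the two phases may have to be interleaved). This demands a careful case analysis of how vertical annuli can sit inside a branch-locus solid torus, organized by the combinatorial labels of the loci — normal, pure, and the quasi-normal intermediate case — together with the separate treatment of non-orientable regions and normal M\"obius-band regions, whose $I$-bundles are twisted and whose IH-move is the M\"obius version of Figure~\ref{fig:IH}. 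Keeping these labels consistent so that each annulus reconfiguration stays within the class of maximally spread multibranched surfaces is the technical core of the argument.
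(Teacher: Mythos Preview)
Your two-phase plan has a genuine gap in the first phase. After isotoping $\mathcal{A}_X$ and $\mathcal{A}_{X'}$ to minimize intersection, the components of $\mathcal{A}_X\cap\mathcal{A}_{X'}$ are indeed essential loops in both systems (this is the content of the paper's Claim~\ref{claim:intersection consists of essential loops}), but these loops \emph{cannot} in general be removed by further isotopy. Concretely, an outermost sub-annulus $E$ of some $A'\in\mathcal{A}_{X'}$ cut off by such a loop $\alpha\subset A_1$ does lie in a branch solid torus $N_1$ and is boundary-parallel there, but $\partial N_1$ carries three annuli $A_1,A_2,A_3$ of $\mathcal{A}_X$ (since $X$ is maximally spread), and the parallelism region between $\alpha$ and the other boundary circle $\beta\subset\partial N$ necessarily crosses one of $A_2,A_3$. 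Pushing $E$ across that parallelism therefore trades the intersection with $A_1$ for a new one with $A_2$ or $A_3$; you do not reach a disjoint configuration. Your parenthetical remark (``the two phases may have to be interleaved'') is exactly right, but it is not a side issue: it is the entire proof.

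The paper's argument never attempts to make the systems disjoint. Instead it performs an IH-move along the region $S_1$ adjacent to $A_1$, guided by the outermost annulus $E$, and tracks a lexicographic complexity $(E(\mathcal{A}_{X'}|\mathcal{A}_X),\,I(\mathcal{A}_{X'},\mathcal{A}_X))$. Even here your expectation that ``an IH-move strictly simplifies the configuration'' is too optimistic: when $S_1$ is a normal annulus region (the paper's Case~3), a single IH-move can fail to decrease $I$, and one must follow the chain of IH-moves while monitoring an $n$-tuple of region-wise intersection counts, showing via a telescoping inequality (after Luo) that the tuple reaches zero in boundedly many steps. This is the technical core you are missing; the second-phase case analysis you propose never actually gets reached.
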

Let $\mathcal{A}_X$ and $\mathcal{A}_{X'}$ be the characteristic annulus system with respect to $X$ and $X'$, respectively. 
We assume that the annuli of $\mathcal{A}_X$ and $\mathcal{A}_{X'}$ intersect transversely and minimally up to isotopy.  
If $( \bigcup_{A \in \mathcal{A}_X} A ) \cap ( \bigcup_{A' \in \mathcal{A}_{X'}} A' ) 
= \emptyset$, then  
 $X$ and $X'$ are isotopic in $N$ since $\mathcal{A}_X$ coincides with $\mathcal{A}_{X'}$ 
up to isotopy. 
Hence we suppose that $( \bigcup_{A \in \mathcal{A}_X} A ) \cap ( \bigcup_{A' \in \mathcal{A}_{X'}} A' ) \neq \emptyset$. 
\begin{claim}
\label{claim:intersection consists of essential loops}
Any component $\alpha$ of the intersection between $A\in\mathcal{A}_X$ and $A'\in\mathcal{A}_{X'}$
is an loop and essential in each of $A$ and $A'$.
\end{claim}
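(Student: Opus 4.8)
The plan is to analyze the intersection curves $\alpha = A \cap A'$ by standard innermost/outermost arguments, using minimality of the intersection to rule out all the degenerate configurations. Since $A$ and $A'$ are both properly embedded annuli in $N$, a component $\alpha$ is either a simple closed loop or an arc with endpoints on $\partial A$ (equivalently on $\partial A'$). First I would show that $\alpha$ cannot be an arc. An arc component $\alpha$ in $A$ cuts off a disk $D \subset A$ together with some sub-arc of $\partial A$; similarly $\alpha$ bounds a disk $D' \subset A'$ with a sub-arc of $\partial A'$. The key point is to recall how the boundaries $\partial A$ and $\partial A'$ sit inside $\partial N$: each annulus of the characteristic system has its boundary on the toral/Klein-bottle pieces $\partial N$, and in fact the boundary circles are of the form $\widehat{C} \times \{\pm 1\}$, which are \emph{essential} (non-trivial, non-boundary-parallel) curves in the corresponding boundary component of $N$. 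Because the boundary curves are essential in $\partial N$ and the annuli are properly embedded and incompressible, I would argue that an arc of intersection would force either a trivial bigon (contradicting minimality, via an isotopy that removes $\alpha$ and at least one other intersection) or a curve that is inessential in its boundary torus (contradicting essentiality of $\partial A, \partial A'$). So no arc components survive, and every $\alpha$ is a loop.

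Next I would rule out the possibility that a loop $\alpha$ is inessential (contractible) in $A$ or in $A'$. Suppose $\alpha$ is inessential in $A$. Then among all such inessential loops choose one that is innermost in $A$, bounding a disk $D \subset A$ whose interior is disjoint from $A'$. The boundary $\partial D = \alpha$ lies on $A'$, where it is either essential or inessential. If $\alpha$ is also inessential in $A'$, it bounds a disk $D' \subset A'$, and $D \cup D'$ is a sphere in $N$; I would push $D'$ across $D$ by an isotopy supported near this sphere to remove $\alpha$ (and the whole family of loops parallel to it) from the intersection, contradicting minimality. If instead $\alpha$ is essential in $A'$ but bounds the disk $D$ in $A$, then $A'$ is compressible via $D$, so $A'$ would not be incompressible — but each characteristic annulus is incompressible in $N$ because its core is parallel to an essential boundary curve $C$ of a region $S_j$, again a contradiction. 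The symmetric argument with the roles of $A$ and $A'$ exchanged shows $\alpha$ cannot be inessential in $A'$ either. Hence $\alpha$ is essential in both.

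The step I expect to be the main obstacle is establishing rigorously that the characteristic annuli are incompressible and that their boundary curves are essential in $\partial N$, since this is what powers every contradiction above; the innermost-disk bookkeeping itself is routine once that foundation is in place. Concretely, incompressibility should follow from the structure of $N$ as $\bigcup_i N_i$ glued along $\partial S \times [-1,1]$: each $\widehat{C} \times [-1,1]$ is parallel to a component of this gluing locus, and a compressing disk would yield a disk region of $X$, which we have assumed away (no region is a disk, and by maximal spreading the branch loci are non-spreadable). I would make this precise by tracing a hypothetical compressing disk through the decomposition and showing it bounds a disk in one of the surface pieces $N_{m+j} \cong S_j \times [-1,1]$ or in a solid torus $N_i$, forcing either a disk component of $S_j$ or a meridian-bounding core, both excluded. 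With that lemma in hand, the claim reduces to the two innermost/outermost arguments sketched above.
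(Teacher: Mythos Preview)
Your overall architecture---innermost/outermost arguments exploiting the minimality of $(\bigcup A)\cap(\bigcup A')$---matches the paper. For inessential loops your incompressibility route is a legitimate alternative to what the paper does: rather than invoking a global lemma, the paper simply observes that an innermost disk $D'\subset A'$ lies in a single piece $N_i$ of the decomposition of $N$ by $\mathcal{A}_X$, and since the core of $A$ is non-trivial in $H_1(N_i)$ (either $N_i$ is a solid torus with $A$ a longitudinal annulus on its boundary, or $N_i\cong S_j\times[-1,1]$ with $S_j$ not a disk), $\alpha$ must be inessential in $A$ as well and can be isotoped away. Your version would work too, but the local argument spares you the incompressibility lemma you flagged as the main obstacle.

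There is, however, a genuine gap in your treatment of arcs. You write that ``an arc component $\alpha$ in $A$ cuts off a disk $D\subset A$,'' and then gesture at a bigon or an inessential boundary curve. This covers only the case where $\alpha$ is \emph{inessential} in one of the annuli (both endpoints on the same boundary circle). When $\alpha$ is an essential arc in \emph{both} $A$ and $A'$---running from one boundary circle to the other in each---cutting along $\alpha$ produces a single rectangle, not an outermost half-disk, and neither incompressibility nor essentiality of $\partial A,\partial A'$ in $\partial N$ yields a contradiction: two essential curves on a torus may well intersect transversally. The paper disposes of this case by a different, combinatorial observation. Look at an outermost rectangular piece $D'$ of $A'$ cut off by $\mathcal{A}_X$; it sits in a solid-torus piece $N_i$ (for some $i\le m$), and its boundary meets $\mathcal{A}_X\cap\partial N_i$ in exactly two essential arcs lying in two distinct annuli. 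Since $D'$ is boundary-parallel in $N_i$, this forces $\partial N_i$ to carry only those two annuli of $\mathcal{A}_X$, i.e.\ the degree $d_i=2$, contradicting $d_i\ge 3$. You need an argument of this type to close the essential-arc case; nothing in your sketch supplies it.
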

\begin{proof}
First, suppose that $\alpha$ is a loop and inessential in $A$ or $A'$, 
say in $A'$. 
We may assume that $\alpha$ is innermost in $A'$. 
Then the disk $D'$ in $A'$ bounded by $\alpha$ is in 
some piece $N_i$ ($i\in\{1,\ldots,m+n\}$). 
Since the core of $A$ is not null-homologous in $N_i$, 
$\alpha$ is also inessential in $A$. 
Thus $\alpha$ can be removed by isotopy in $N$, which is a contradiction.
Next, suppose that $\alpha$ is an arc and inessential in $A$ or $A'$, say in $A'$.
We may assume that $\alpha$ is an outermost arc in $A'$. 
The disk $D'$ cut off from $A'$ by $\alpha$ is in 
some piece $N_i$ ($i\in\{1,\ldots,m+n\}$).
Then $\alpha$ is also inessential in $A$, and so can be removed by isotopy in $N$, 
a contradiction.
Finally, suppose that $\alpha$ is an arc and essential in each of $A$ and $A'$. 
Let $D'$ be the component of $A'$ cut off by $A\in\mathcal{A}$ 
such that $\alpha \subset \partial D$ and $D'$ lies in some solid torus $N_i$ ($i\in\{1,\ldots,m\}$). 
$\partial D'$ intersects $A\in\mathcal{A}$ in two essential arcs in $A'$. 
This implies the degree $d_i$ of $B_i$ is $2$, that is a contradiction.
\end{proof}


By Claim \ref{claim:intersection consists of essential loops}, each component of 
$( \bigcup_{A \in \mathcal{A}_X} A ) \cap ( \bigcup_{A' \in \mathcal{A}_{X'}} A' )$ 
is a loop essential both in $\bigcup_{A \in \mathcal{A}_X} A$ and 
$\bigcup_{A' \in \mathcal{A}_{X'}} A'$.
For such systems $\mathcal{A},\mathcal{A'}$ of annuli in $N$, 
we denote by $I(\mathcal{A},\mathcal{A'})$ the number of components of the intersection 
$( \bigcup_{A \in \mathcal{A}} A ) \cap ( \bigcup_{A' \in \mathcal{A'}} A' )$, 
and by $End(\mathcal{A'}|\mathcal{A})$ the set of outermost annuli cut off from annuli in $\mathcal{A'}$ along loop intersections. 
Put $E(\mathcal{A'}|\mathcal{A}):=|End(\mathcal{A'}|\mathcal{A})|$. 
We will prove Theorem \ref{thm:IH} by induction on the complexity $(E(\mathcal{A}_{X'}|\mathcal{A}_{X}),I(\mathcal{A}_{X'},\mathcal{A}_{X}))$ in the lexicographic order.
Each annulus of $\mathcal{A}_{X'}$ having nonempty intersection with annuli of $\mathcal{A}_X$ contains exact two elements of $End(\mathcal{A_{A'}}|\mathcal{A}_{X})$, so $E(\mathcal{A}_{X'}|\mathcal{A}_{X})$ is twice the number of such annuli of $\mathcal{A}_{X'}$.
We may assume that any annuli of $\mathcal{A}_{X'}$ without intersection have been moved into $\bigcup_{j}N_{m+j}$.
Take an element $E$ of $End(\mathcal{A}_{X'}|\mathcal{A}_X)$, say,  $E\subset A'_{1}\in \mathcal{A}_{X'}$, $E\cap ( \bigcup_{A \in \mathcal{A}_{X}} A )=\partial E\cap A_1=\alpha$, and $\beta = \partial E - \alpha$.  
For $i\in\{1,\ldots,m\}$, we say that the solid torus $N_i$ is {\em normal} or {\em pure} 
if $B_i$ is so. 

\begin{claim}\label{claim:end}
$E$ is in an normal solid torus, say $N_1$. 
\end{claim}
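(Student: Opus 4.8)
The plan is to show that the piece $N_i$ of $N$ containing the outermost annulus $E$ is forced to be a normal solid torus, by excluding the two remaining possibilities: that $N_i$ is a surface piece $N_{m+j}$, and that $N_i$ is a pure solid torus. Recall that $X$ is maximally spread, so every branch locus of $X$ is either normal and tribranched or pure; hence every solid-torus piece $N_i$ ($i\le m$) is normal or pure, and it suffices to rule out surface pieces and pure solid tori. Throughout I work in minimal position, so that any isotopy of $A'_1$ strictly decreasing $I(\mathcal{A}_{X'},\mathcal{A}_{X})$ gives a contradiction. For the local picture: since $E$ is outermost it meets $\bigcup_{A\in\mathcal A_X}A$ only in $\alpha$, so $E$ lies in a single piece $N_i$, with $\alpha$ an essential loop on the copy of the splitting annulus $A_1\subset\partial N_i$ (by Claim \ref{claim:intersection consists of essential loops}) and $\beta=\partial E-\alpha$ an essential loop on $\partial N\cap\partial N_i$. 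Because $\alpha$ is essential (nontrivial in $H_1$ of the relevant piece), $E$ is incompressible in $N_i$, so it is an essential properly embedded annulus.

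To exclude surface pieces, suppose $N_i=N_{m+j}$, modelled on the $I$-bundle $S_j\times[-1,1]$. Then $\alpha$ is essential in the vertical boundary $\partial_v=\partial S_j\times[-1,1]$ (it is parallel to the boundary component $C\subset\partial S_j$ with $f(C)=B_i$), while $\beta$ lies in the horizontal boundary $\partial_h=S_j\times\{\pm 1\}\subset\partial N$. An incompressible annulus in an $I$-bundle meeting both $\partial_v$ and $\partial_h$ in essential curves is boundary-parallel, so $E$ is parallel through $N_{m+j}$ to an annulus in $\partial N_{m+j}$ running over part of $A_1$. Sliding $A_1'$ across this parallelism pushes $E$ through the splitting annulus $A_1$ and removes the loop $\alpha$, decreasing $I(\mathcal{A}_{X'},\mathcal{A}_{X})$, a contradiction. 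Hence $E$ lies in a solid torus $N_i$ with $i\le m$.

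It remains to exclude pure solid tori. If $N_i$ is pure then $f^{-1}(B_i)$ is a single curve winding $d_i\ge 3$ times around $B_i$, so $\partial N_i$ carries exactly one splitting annulus $A_1$ (with core winding $d_i$ times) together with exactly one complementary external annulus $R\subset\partial N$; in particular $\alpha$ and $\beta$ both wind $d_i$ times around the core of $N_i$. Since every incompressible annulus in a solid torus is boundary-parallel, $E$ is parallel to an annulus $R'\subset\partial N_i$ cobounded by $\alpha$ and $\beta$. As $A_1$ is the only splitting annulus meeting $N_i$, the parallelism region can again be used to slide $A'_1$ across $A_1$, removing $\alpha$ and decreasing $I$, a contradiction. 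Therefore $N_i$ is normal, and, $X$ being maximally spread, normal and tribranched, which is the assertion of the claim.

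The main obstacle is the verification, in both exclusion steps, that the boundary-parallelism actually yields an isotopy \emph{strictly} reducing $I$: one must check that the parallelism region meets no other annulus of $\mathcal{A}_{X'}$, so that sliding $A'_1$ creates no new intersections. This is precisely where the normal tribranched case behaves differently, and where no contradiction arises: there $\partial N_i$ carries three splitting annuli, and the analogous slide is obstructed by the two sheets other than the one containing $\alpha$, so the outermost annulus persists exactly when $N_i$ is a normal solid torus. I expect to settle the unobstructedness using the outermost property of $E$, which already guarantees the parallelism region is disjoint from $\bigcup_{A\in\mathcal A_X}A$, together with a minimal-position analysis of $A'_1$ and the remaining $X'$-annuli inside the product region (surface-piece case) and inside the single-splitting-annulus region (pure case).
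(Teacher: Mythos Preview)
Your strategy is exactly the paper's: rule out the surface pieces $N_{m+j}$ and the pure solid tori by showing that in either case the boundary-parallelism of $E$ yields an isotopy removing $\alpha$, contradicting minimality of $I(\mathcal{A}_{X'},\mathcal{A}_X)$; then normality follows because $X$ is maximally spread. The paper's proof is terser but makes precisely these two exclusions.

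The obstacle you flag in the last paragraph is not a genuine one, and you should simply close it rather than leave it pending. The minimality hypothesis concerns the intersection of the \emph{systems} $\mathcal{A}_X$ and $\mathcal{A}_{X'}$, so you may perform an ambient isotopy of $N$ supported near the parallelism region $V$, carrying every piece of $\mathcal{A}_{X'}$ inside $V$ along simultaneously; they stay mutually disjoint, and since $V$ lies in a single piece $N_i$ the only annulus of $\mathcal{A}_X$ that can be crossed is $A_1$ itself, so $\alpha$ is removed and no new intersections with $\mathcal{A}_X$ are created. (Equivalently, among the pieces of $\mathcal{A}_{X'}\cap N_i$ with one boundary on $A_1$ and the other on $\partial N$, take the one whose parallelism region is innermost and slide that first.) With this sentence in place your argument is complete and matches the paper's.
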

\begin{proof}
Suppose that $E$ is in $N_{m+j}$ ($j\in\{1,\ldots,n\}$).
Recall that $N_{m+j}$ is homeomorphic to $S_j\times[-1,1]$ or $S_j\tilde{\times}[-1,1]$.
Since $\alpha$ is in $\partial S_j\times [-1,1]$ or $S_j\tilde{\times}[-1,1]$ and 
$\beta$ is in $S_j\times\{-1,1\}$ or $S_j\tilde{\times}\{-1,1\}$, 
the intersection $\alpha$ can be removed by isotopy in $N$.
This is a contradiction.
Thus $E$ is in some solid torus, say $N_1$. 
The union $\alpha\cup\beta$ is a $(2d_1,2e_1)$-torus link in $\partial N_1$, where $d_1$ and $e_1$ are relative prime integers.
The annulus $E$ bounded by $\alpha\cup\beta$ is boundary-parallel in $N_1$. 
If $\partial N_1$ contains no annulus of $\mathcal{A}_X$ other than the annulus containing $\alpha$, 
the intersection $\alpha$ can be removed by isotopy in $N$.
This is again a contradiction.
Hence $\partial N_1$ contains more than two annuli of $\mathcal{A}_X$.
This implies that $N_1$ is not pure, thus it is normal.
\end{proof}

%
 
Let $A_2,A_3$ be the annuli of $\mathcal{A}_X$ lying in $\partial N_1$ other than $A_1$. 
See Figure \ref{fig:N_1}. 
\begin{figure}[h]
\begin{center}
\begin{minipage}{3cm}
\includegraphics[width=\textwidth]{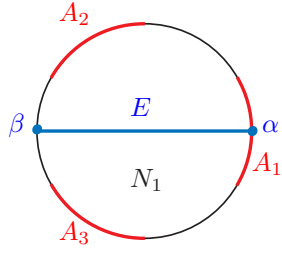}
\begin{picture}(400,0)(0,0)
\put(83,38){\color{red} $A_1$}
\put(10,95){\color{red} $A_2$}
\put(10,12){\color{red} $A_3$}
\put(37,59){\color{blue} $E$}
\put(87,53){\color{blue} $\alpha$}
\put(-9,53){\color{blue} $\beta$}
\put(37,32){$N_1$}
\end{picture}
\end{minipage}
\caption{The normal solid torus $N_1$.}
\label{fig:N_1}
\end{center}
\end{figure}
\begin{claim}\label{claim:intersectN_1}
$I(\mathcal{A}_{X'},\{A_1\})>I(\mathcal{A}_{X'},\{A_2\})+I(\mathcal{A}_{X'},\{A_3\})$.
\end{claim}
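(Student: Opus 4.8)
The plan is to analyze the intersection pattern inside the normal (hence tribranched, so degree $3$) solid torus $N_1$ supplied by Claim \ref{claim:end}, and to reduce the inequality to a combinatorial statement on a meridian disk. First I would fix a meridian disk $D$ of $N_1$ and set $T := \partial N_1$. Since $B_1$ is normal and tribranched, $T$ is divided by the three longitudinal annuli $A_1, A_2, A_3$ into these annuli together with three complementary strips $g_1, g_2, g_3$ lying in $\partial N$. By Claim \ref{claim:intersection consists of essential loops} every loop of $(\bigcup_{A'\in\mathcal{A}_{X'}}A')\cap(A_1\cup A_2\cup A_3)$ is essential, hence isotopic to the core of $N_1$, so each such loop meets $\partial D$ exactly once. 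Reading around $\mu := \partial D$ one therefore sees, in cyclic order, the trace arcs $a_1, a_2, a_3$ of $A_1, A_2, A_3$ interleaved with the three gap arcs, with exactly $I_k := I(\mathcal{A}_{X'},\{A_k\})$ marked points on $a_k$; the claim becomes $I_1 > I_2 + I_3$.

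Next I would show that every component of $(\bigcup_{A'}A')\cap N_1$ is a $\partial$-parallel annulus: its boundary curves are longitudinal, hence essential and non-meridional in $N_1$, so incompressibility of $\mathcal{A}_{X'}$ together with minimality of the intersection forces each component to be boundary parallel, and such an annulus meets $D$ in a single spanning arc. The totality of these arcs is a family of disjoint properly embedded arcs in $D$, i.e.\ a non-crossing perfect matching of the marked points of $\mu$ (the loop points on the $a_k$ and the free points $\partial A'\cap g_k$ in the gaps). Minimality then yields two consequences: (i) no chord joins two points of the same $a_k$, since an innermost such chord would give a $\partial$-compression of some $A'$ into $A_k$ reducing the intersection; and (ii) for the distinguished outermost annulus $E$, whose chord $c_E$ runs from $\alpha\in a_1$ to the free point $\beta$, the band $E^*\subset T$ cobounded by $\partial E$ must sweep entirely across at least one of $A_2, A_3$ — otherwise $E$ could be pushed across $E^*$ to delete $\alpha$, contradicting minimality (see Figure \ref{fig:N_1}). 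The same reasoning applies to every outermost piece: its parallel band sweeps over a neighbouring characteristic annulus.

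The heart of the argument is a non-crossing ``trapping''. Say $E^*$ sweeps across $A_2$; then all of $a_2$ lies on the side $D_E$ of $c_E$ while $a_3$ lies on the other side, so non-crossingness \emph{forbids any chord from $a_2$ to $a_3$}. Combined with (i), each of the $I_2$ points of $a_2$ is matched, inside $D_E$, either to a point of $a_1$ or to a free point; moreover any $a_2$-to-gap chord is itself an outermost piece whose band, by (ii), must again sweep a neighbouring annulus and is confined to $D_E$, so it cannot escape to $a_3$ either. Tracing the nested chords inside $D_E$ (and treating $A_3$ by the complementary region, distinguishing the cases $\beta\in g_2$, where $E^*$ covers exactly one of $A_2,A_3$, and $\beta\in g_1$ or $g_3$, where $E^*$ must cover both) produces an injection of the $a_2$- and $a_3$-points into the $a_1$-points. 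Finally, the point $\alpha$ is matched to the free point $\beta$ rather than to any $a_2$- or $a_3$-point, so it is omitted from the image of this injection, giving the strict inequality $I_2 + I_3 < I_1$.

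The hard part will be the bookkeeping in the last step: turning the non-crossing trapping into a genuine, globally defined injection requires controlling the chords that terminate at free boundary points in the gaps, which occupy slots in the matching without contributing to any $I_k$, and verifying that the counts on the two sides of $c_E$ combine to yield strictness rather than merely $I_2 + I_3 \le I_1$. Equally delicate is making consequence (ii) rigorous — that minimality really forces each outermost band to sweep across a whole neighbouring annulus — since the reducing push is legitimate only when the relevant parallelism region is disjoint from the remainder of $\mathcal{A}_{X'}$, so one must first select the outermost pieces with innermost parallelism regions before invoking minimality.
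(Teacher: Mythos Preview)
Your core observation---that the outermost annulus $E$ (equivalently its trace chord $c_E$) separates the solid torus so that $A_2$ and $A_3$ lie on opposite sides---is exactly the right one, and it is the same separation the paper exploits. But you are making the subsequent counting far harder than necessary, and the ``bookkeeping'' you flag as the hard part dissolves once you classify the annulus components of $(\bigcup A')\cap N_1$ by boundary type rather than trying to construct an injection chord by chord.

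The paper's proof bypasses meridian disks and matchings entirely. Label the three complementary strips $\tilde A_1,\tilde A_2,\tilde A_3\subset\partial N_1\cap\partial N$ so that $\tilde A_j$ is the one disjoint from $A_j$. Then sort the annulus components of $(\bigcup A')\cap N_1$ by where their two boundary circles land: $b_{jk}$ counts those with boundary in $A_j\cup A_k$ ($j\neq k$), and $b_{jj}$ those with boundary in $A_j\cup\tilde A_j$. Your minimality observations (i) and (ii), pushed one step further, show these are the \emph{only} types that survive: a component with one loop in $A_j$ and one in the adjacent strip $\tilde A_k$ ($k\neq j$) has a parallelism region meeting no annulus of $\mathcal A_X$, so it can be isotoped off. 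In particular $\beta\in\tilde A_1$; your case-split on which gap contains $\beta$ is therefore unnecessary---only the ``opposite gap'' case ever occurs.

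With this classification in hand the separation by $E$ finishes the proof in one line. Since $\partial E\subset A_1\cup\tilde A_1$, the two components of $N_1\setminus E$ contain $A_2,\tilde A_3$ on one side and $A_3,\tilde A_2$ on the other, so any component contributing to $b_{22}$, $b_{33}$, or $b_{23}$ would have to cross $E$; hence $b_{22}=b_{33}=b_{23}=0$. Then
\[
I(\mathcal A_{X'},\{A_1\})=b_{11}+b_{12}+b_{13},\qquad
I(\mathcal A_{X'},\{A_2\})=b_{12},\qquad
I(\mathcal A_{X'},\{A_3\})=b_{13},
\]
and $b_{11}\geq 1$ because $E$ itself is counted there. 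The strict inequality is immediate---no injection to build, no nested chords to trace, and no free gap-points to control.
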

\begin{proof}
Let $\tilde{A}_1,\tilde{A}_2, \tilde{A}_3$ be components of 
the closure of $\partial N_1-(A_1\cup A_2\cup A_3)$ 
such that $\tilde{A}_i$ and $A_i$ are disjoint for each $i\in\{1,2,3\}$.
Let $b_{jk}$ (resp. $b_{jj}$) be the number of (annulus) components of $( \bigcup_{A' \in \mathcal{A}_{X'}} A' )\cap N_1$ 
whose boundary lies in $A_j\cup A_k$ for $j\neq k$ 
(resp. $A_j\cup \tilde{A}_j$ for each $j\in\{1,2,3\}$). 
Then  $I(\mathcal{A}_{X'},\{A_j\})=\sum_{k} b_{jk}$ for each $j\in\{1,2,3\}$.
The annulus $E$ with boundaries $\alpha\in A_1$,  $\beta\in\tilde{A}_1$ is counted in $b_{11}$, and so $b_{11} > 0$.
Any annulus component of $( \bigcup_{A' \in \mathcal{A}_{X'}} A' )\cap N_1$ other than $E$ 
has to be contained in one of the two components of 
$N_1- E$, otherwise intersects $E$. 
Thus we have $b_{22} = b_{33} = b_{23} = 0$. 
Further, we have  
$I(\mathcal{A}_{X'},\{A_1\})= b_{11} + b_{12} + b_{13} =b_{11}+I(\mathcal{A}_{X'},\{A_2\})+I(\mathcal{A}_{X'},\{A_3\})>I(\mathcal{A}_{X'},\{A_2\})+I(\mathcal{A}_{X'},\{A_3\})$. 
\end{proof}
We may assume that $A_1 \subset N_1\cap N_{m+1}$.
Recall that the piece $N_{m+1}$ corresponds to the region $S_1$, 
which is an normal M\"{o}bius band region, normal annulus region, quasi-normal annulus region or closing annulus region.  
Here we know $S_1$ is neither an unnormal M\"{o}bius band region nor
 an unnormal annulus region since $B_1$ is normal by Claim \ref{claim:end}.
\begin{claim}\label{claim:notorus}
$S_1$ is not a closing annulus 
region.
\end{claim}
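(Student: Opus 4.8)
The plan is to rule out the case where $S_1$ is a closing annulus region by deriving a contradiction from the homological constraints it would impose inside the solid torus $N_1$. Recall that a closing annulus region has both boundary components mapping to the \emph{same} branch locus. First I would unwind what this means for the piece $N_{m+1}$ sitting against $N_1$: if $S_1$ is a closing annulus, then the annulus $A_1 \subset N_1 \cap N_{m+1}$ is identified, through $N_{m+1}$, with a second annulus lying in the \emph{same} solid torus $N_1$ (rather than in a distinct piece). In other words, both boundary circles of the closing annulus wrap around the branch locus $B_1$, so the corresponding characteristic annuli are both glued into $\partial N_1$.

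The key step is a core-curve computation on $\partial N_1$. Each annulus of $\mathcal{A}_X$ meeting $\partial N_1$ has its core isotopic to a $(d_1, *)$-curve, where $d_1$ is the degree of $B_1$; by Claim~\ref{claim:end} we know $B_1$ is normal, so $w_1 = 1$ and the wrapping numbers of the boundary components of $S_1$ are constrained. The plan is to examine how the two boundary circles of the closing annulus $S_1$ each cover $B_1$ and to check the resulting covering-degree bookkeeping against the normality of $B_1$. Concretely, I would show that a closing annulus forces a contribution to the covering map $f \colon \partial S \to B$ over $B_1$ that is incompatible with $w_1 = 1$, or else forces $A_1$ together with the annulus $E$ to cobound a region in $N_1$ that allows $\alpha$ to be removed by isotopy --- contradicting the minimality of $I(\mathcal{A}_X, \mathcal{A}_{X'})$ established before Claim~\ref{claim:intersection consists of essential loops}.

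The cleanest route is probably via the boundary-parallelism already exploited in Claim~\ref{claim:end}. There $E$ was shown to be boundary-parallel in $N_1$, cutting $N_1$ into two pieces, and the argument of Claim~\ref{claim:intersectN_1} showed $b_{22} = b_{33} = b_{23} = 0$. I would reuse this: if $S_1$ is a closing annulus, both ends of $S_1$ return to $\partial N_1$, so the annulus $A_1$ and the ``other'' copy of the closing annulus in $\partial N_1$ must sit on opposite sides of $E$ (since nothing crosses $E$). One then checks that this configuration makes the core of $A_1$ parallel, across the exterior piece $N_{m+1}$, to a curve lying on the \emph{same} side boundary of $N_1$ as $\beta$; tracing $E$ through $N_{m+1}$ and back produces a boundary-parallel annulus in $N$ realizing an isotopy that removes $\alpha$. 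This contradicts the transverse-minimal intersection assumption on $\mathcal{A}_X$ and $\mathcal{A}_{X'}$.

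The main obstacle I expect is the careful bookkeeping of how the single branch locus $B_1$ is covered by the two boundary circles of the closing annulus, together with keeping track of signs and wrapping numbers so that the homological computation in $\partial N_1$ genuinely forces the degree $d_1$ or the wrapping number $w_1$ to violate normality. The subtlety is that a closing annulus does not on its own contradict normality --- one must combine it with the specific position of $E$ as an \emph{outermost} intersection annulus and with the fact that $\alpha$ is essential in $A_1$ (Claim~\ref{claim:intersection consists of essential loops}) --- so the argument must exploit the extremal choice of $E$ rather than normality alone. Once the geometric picture of $E$ cutting $N_1$ is combined with the closing-annulus identification in $N_{m+1}$, the contradiction should follow by the same isotopy-removal mechanism used repeatedly above.
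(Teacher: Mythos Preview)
Your proposal does not contain a proof; it is a collection of heuristics, none of which is carried to a contradiction, and the main route you sketch has a genuine gap.

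The ``isotopy removal'' argument is the only concrete mechanism you propose, but it does not work as stated. You write that ``tracing $E$ through $N_{m+1}$ and back produces a boundary-parallel annulus in $N$ realizing an isotopy that removes $\alpha$.'' But $E$ lies entirely in $N_1$, with $\beta \subset \partial N$; there is nothing to trace through $N_{m+1}$. What you could push through $N_{m+1}$ is the portion of $A'_1$ on the $N_{m+1}$-side of $\alpha$, and there is no reason this portion returns to give a $\partial$-parallel annulus allowing $\alpha$ to be removed. The closing-annulus hypothesis only tells you that $N_{m+1}$ is an $(\text{annulus})\times I$ glued to $N_1$ along $A_1$ and one of $A_2,A_3$; it does not by itself force any boundary-parallelism of pieces of $A'_1$ in $N$. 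The homological and wrapping-number discussion is, as you yourself concede, a red herring: normality of $B_1$ is compatible with $S_1$ being a closing annulus.

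The paper's argument is a two-line count that you overlooked. If $S_1$ is a closing annulus then $N_{m+1}\cong S_1\times[-1,1]$ is glued to $N_1$ along, say, $A_1\cup A_2$. Every component of $(\bigcup_{A'\in\mathcal{A}_{X'}}A')\cap N_{m+1}$ is an essential annulus in this product, hence has one boundary circle on $A_1$ and the other on $A_2$; therefore $I(\mathcal{A}_{X'},\{A_1\})=I(\mathcal{A}_{X'},\{A_2\})$. This contradicts Claim~\ref{claim:intersectN_1}, which gives $I(\mathcal{A}_{X'},\{A_1\})>I(\mathcal{A}_{X'},\{A_2\})+I(\mathcal{A}_{X'},\{A_3\})\ge I(\mathcal{A}_{X'},\{A_2\})$. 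The point is that Claim~\ref{claim:intersectN_1} already encodes all the information about $E$ that is needed; there is no further geometric construction to do.
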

\begin{proof}
Suppose that $S_1$ is a closing annulus 
region. 
It means that $N_{1}\cap N_{m+1}=A_1\cup A_2$ or $A_1\cup A_3$, say, $A_1\cup A_2$. 
Two boundary components of each annulus component of $( \bigcup_{A' \in \mathcal{A}_{X'}} A' )\cap N_{m+1}$ must lie $A_1$ and $A_2$, respectively. Then $I(\mathcal{A}_{X'},\{A_1\})=I(\mathcal{A}_{X'},\{A_2\})$, that contradicts Claim \ref{claim:intersectN_1}.
\end{proof}

If $S_1$ is an normal M\"{o}bius band region, let $N_{12}$ be the solid torus $N_1\cup N_{m+1}$. 
If $S_1$ is an annulus region, by Claim \ref{claim:notorus}, $\partial S_1$ has one branch locus other than $B_1$, say, $B_2$, so let $N_{12}$ be the solid torus $N_1\cup N_{m+1}\cup N_2$, and put  $A_4:=N_2\cap N_{m+1}$, which is an annulus of $\mathcal{A}_X$.
Consider the maximally spread 
surface $X^{(1)}$ obtained from $X$ by an IH-move along $S_1$.
If $S_1$ is an normal M\"{o}bius band region, the characteristic annulus system $\mathcal{A}_{X^{(1)}}$ with respect to $X^{(1)}$
is obtained from $\mathcal{A}_{X}$ by replacing $A_1$ with an annulus 
$A_1^{(1)}$ 
in $N_{12}$ disjoint from any annuli of $\mathcal{A}_{X}-\{A_1\}$. 
See Figure \ref{fig:IH-move_mobius_band}. 
\begin{figure}[h]
\begin{center}
\begin{minipage}{10.5cm}
\includegraphics[width=\textwidth]{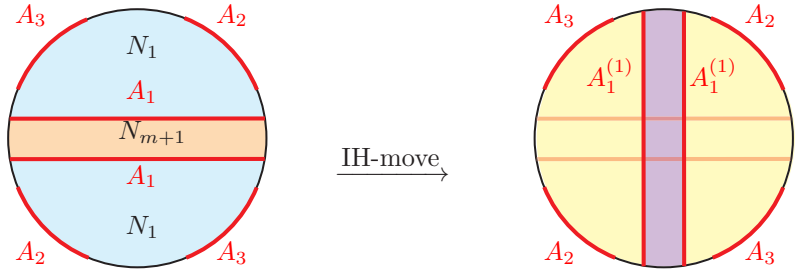}
\begin{picture}(400,0)(0,0)
\put(124,45){$\xrightarrow[]{\mbox{IH-move}}$}
\put(45,44){\color{red} $A_1$}
\put(45,76){\color{red} $A_1$}
\put(79,105){\color{red} $A_2$}
\put(3,15){\color{red} $A_2$}
\put(3,105){\color{red} $A_3$}
\put(79,15){\color{red} $A_3$}
\put(45,93){$N_1$}
\put(45,25){$N_1$}
\put(42,61){$N_{m+1}$}
\put(219,81){\color{red} $A_1^{(1)}$}
\put(258,81){\color{red} $A_1^{(1)}$}
\put(279,105){\color{red} $A_2$}
\put(203,15){\color{red} $A_2$}
\put(203,105){\color{red} $A_3$}
\put(279,15){\color{red} $A_3$}
\end{picture}
\end{minipage}
\caption{The IH-move along $S_1$ when $S_1$ is an normal M\"{o}bius-band region.}
\label{fig:IH-move_mobius_band}
\end{center}
\end{figure}
If, on the other hand, $S_1$ is an normal or quasi-normal annulus region, 
the characteristic annulus system $\mathcal{A}_{X^{(1)}}$ with respect to $X^{(1)}$
is obtained from $\mathcal{A}_{X}$ by replacing $\{A_1,A_4\}$ with 
parallel annuli $\{A_1^{(1)},A_4^{(1)}\}$ 
in $N_{12}$ disjoint from any annuli of $\mathcal{A}_{X}-\{A_1,A_4\}$. 
See Figure \ref{fig:IH-move_mobius_band}. 
\begin{figure}[h]
\begin{center}
\begin{minipage}{400pt}
\includegraphics[width=\textwidth]{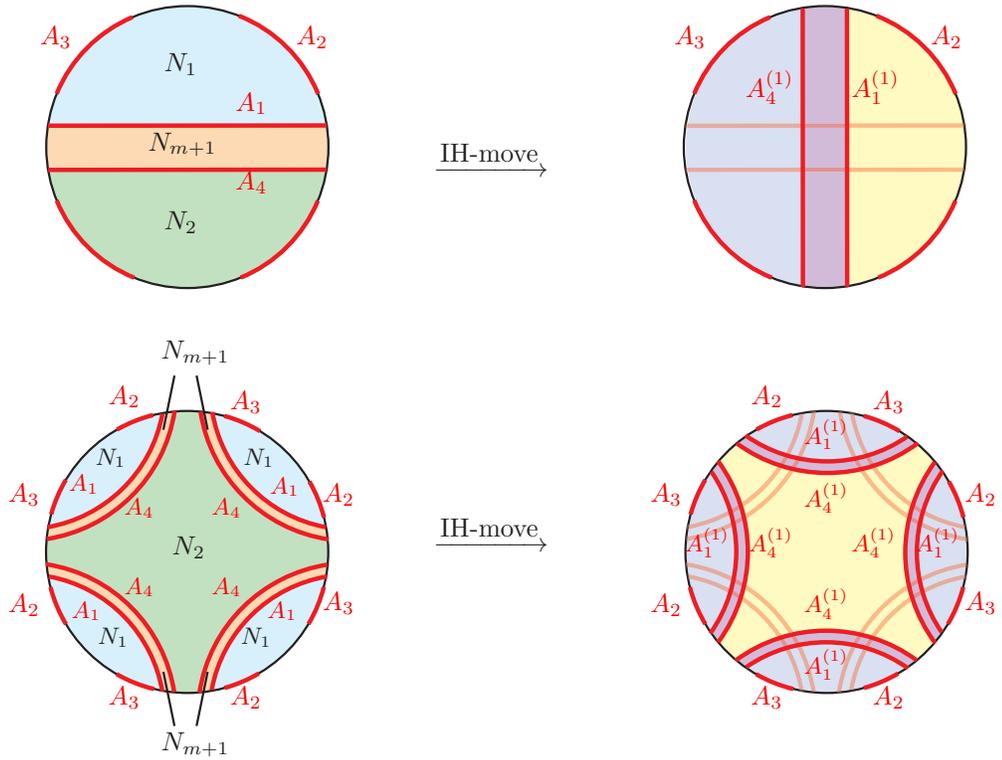}
\begin{picture}(400,0)(0,0)
\put(172,220){$\xrightarrow[]{\mbox{IH-move}}$}
\put(172,78){$\xrightarrow[]{\mbox{IH-move}}$}
\put(97,245){\color{red} $A_1$}
\put(120,270){\color{red} $A_2$}
\put(23,270){\color{red} $A_3$}
\put(97,215){\color{red} $A_4$}
\put(330,250){\color{red} $A_1^{(1)}$}
\put(360,270){\color{red} $A_2$}
\put(263,270){\color{red} $A_3$}
\put(290,250){\color{red} $A_4^{(1)}$}
\put(70,260){$N_1$}
\put(70,200){$N_2$}
\put(64,230){$N_{m+1}$}
\put(34,101){\small \color{red} $A_1$}
\put(110,100){\small \color{red} $A_1$}
\put(35,53){\small \color{red} $A_1$}
\put(109,53){\small \color{red} $A_1$}
\put(130,97){\color{red} $A_2$}
\put(49,134){\color{red} $A_2$}
\put(11,55){\color{red} $A_2$}
\put(95,20){\color{red} $A_2$}
\put(95,132){\color{red} $A_3$}
\put(11,97){\color{red} $A_3$}
\put(49,20){\color{red} $A_3$}
\put(130,56){\color{red} $A_3$}
\put(88,92){\small \color{red} $A_4$}
\put(55,92){\small \color{red} $A_4$}
\put(88,62){\small \color{red} $A_4$}
\put(55,62){\small \color{red} $A_4$}
\put(267,78){\small \color{red} $A_1^{(1)}$}
\put(312,119){\small \color{red} $A_1^{(1)}$}
\put(354,78){\small \color{red} $A_1^{(1)}$}
\put(312,32){\small \color{red} $A_1^{(1)}$}
\put(373,97){\color{red} $A_2$}
\put(292,134){\color{red} $A_2$}
\put(254,55){\color{red} $A_2$}
\put(338,20){\color{red} $A_2$}
\put(338,132){\color{red} $A_3$}
\put(254,97){\color{red} $A_3$}
\put(292,20){\color{red} $A_3$}
\put(373,56){\color{red} $A_3$}
\put(291,78){\small \color{red} $A_4^{(1)}$}
\put(312,95){\small \color{red} $A_4^{(1)}$}
\put(330,78){\small \color{red} $A_4^{(1)}$}
\put(312,55){\small \color{red} $A_4^{(1)}$}
\put(69,3){$N_{m+1}$}
\put(69,151){$N_{m+1}$}
\put(73,77){$N_{2}$}
\put(44,111){\small $N_1$}
\put(100,111){\small $N_1$}
\put(45,43){\small $N_1$}
\put(99,43){\small $N_1$}
\end{picture}
\end{minipage}
\caption{The IH-move along $S_1$ when $S_1$ is an normal or quasi-normal annulus region.}
\label{fig:IH-move_annulus}
\end{center}
\end{figure}
 

Let $E^*$ be the component of $A'_{1}\cap N_{12}$ which contains the annulus $E$ and $\gamma=\partial E^*-\beta$.
Then exactly one of the following holds: 
\begin{description}
\item{Case $1$:}  $E^*=A'_{1}$, see Figure \ref{fig:Case_1}.
\begin{figure}[h]
\begin{center}
\begin{minipage}{12cm}
\includegraphics[width=\textwidth]{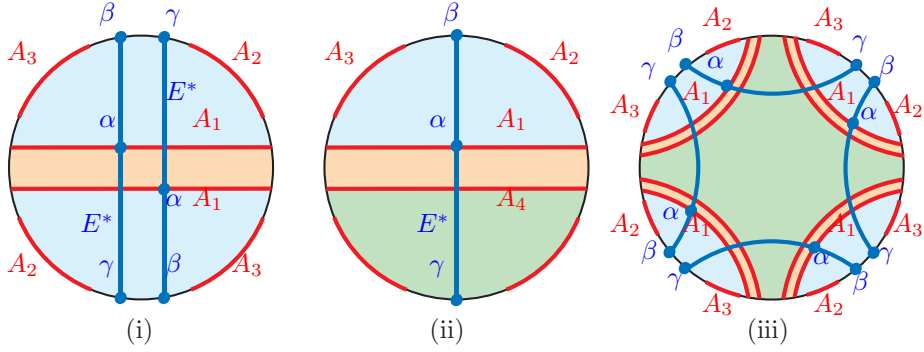}
\begin{picture}(400,0)(0,0)
\put(70,80){\color{red} $A_1$}
\put(70,50){\color{red} $A_1$}
\put(0,105){\color{red} $A_3$}
\put(85,105){\color{red} $A_2$}
\put(0,25){\color{red} $A_2$}
\put(85,25){\color{red} $A_3$}
\put(28,40){\color{blue} $E^*$}
\put(35,80){\color{blue} $\alpha$}
\put(35,120){\color{blue} $\beta$}
\put(35,25){\color{blue} $\gamma$}
\put(60,90){\color{blue} $E^*$}
\put(60,50){\color{blue} $\alpha$}
\put(60,120){\color{blue} $\gamma$}
\put(60,25){\color{blue} $\beta$}
\put(185,80){\color{red} $A_1$}
\put(185,50){\color{red} $A_4$}
\put(120,105){\color{red} $A_3$}
\put(205,105){\color{red} $A_2$}
\put(155,40){\color{blue} $E^*$}
\put(160,80){\color{blue} $\alpha$}
\put(165,120){\color{blue} $\beta$}
\put(160,25){\color{blue} $\gamma$}
\put(228,85){\color{red} $A_3$}
\put(263,118){\color{red} $A_2$}
\put(255,90){\color{red} $A_1$}
\put(305,118){\color{red} $A_3$}
\put(335,85){\color{red} $A_2$}
\put(310,90){\color{red} $A_1$}
\put(228,40){\color{red} $A_2$}
\put(263,10){\color{red} $A_3$}
\put(255,40){\color{red} $A_1$}
\put(305,10){\color{red} $A_2$}
\put(335,40){\color{red} $A_3$}
\put(310,40){\color{red} $A_1$}
\put(265,102){\color{blue} $\alpha$}
\put(250,110){\color{blue} $\beta$}
\put(240,100){\color{blue} $\gamma$}
\put(323,83){\color{blue} $\alpha$}
\put(330,100){\color{blue} $\beta$}
\put(320,110){\color{blue} $\gamma$}
\put(248,45){\color{blue} $\alpha$}
\put(240,30){\color{blue} $\beta$}
\put(250,20){\color{blue} $\gamma$}
\put(305,28){\color{blue} $\alpha$}
\put(320,18){\color{blue} $\beta$}
\put(330,30){\color{blue} $\gamma$}
\put(45,0){(i)}
\put(160,0){(ii)}
\put(280,0){(iii)}
\end{picture}
\end{minipage}
\caption{The annulus $E^*$ in Case 1 when $S_1$ is (i) an normal M\"{o}bius-band region; (ii) an normal annulus region; 
(iii) a quasi-normal annulus region. }
\label{fig:Case_1}
\end{center}
\end{figure}
\item{Case $2$:}  $E^*\neq A'_{1}$ and $S_1$ is an normal M\"{o}bius-band region or a quasi-normal annulus region, see Figure \ref{fig:Case_2}.
\begin{figure}[h]
\begin{center}
\begin{minipage}{12cm}
\includegraphics[width=\textwidth]{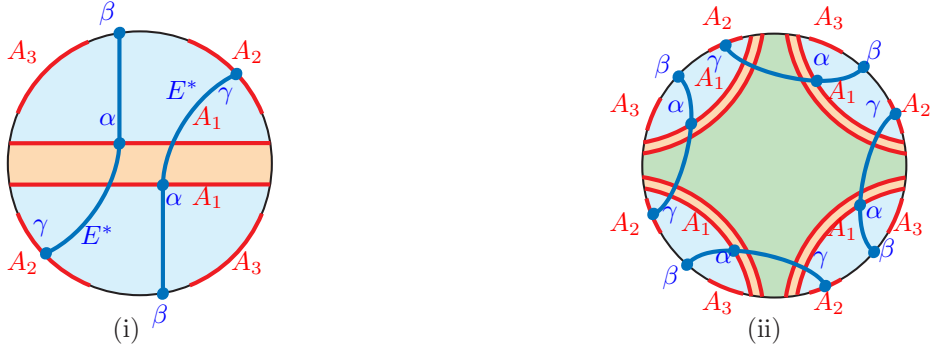}
\begin{picture}(400,0)(0,0)
\put(70,80){\color{red} $A_1$}
\put(70,50){\color{red} $A_1$}
\put(0,105){\color{red} $A_3$}
\put(85,105){\color{red} $A_2$}
\put(0,25){\color{red} $A_2$}
\put(85,25){\color{red} $A_3$}
\put(28,35){\color{blue} $E^*$}
\put(35,80){\color{blue} $\alpha$}
\put(35,120){\color{blue} $\beta$}
\put(10,40){\color{blue} $\gamma$}
\put(60,90){\color{blue} $E^*$}
\put(60,50){\color{blue} $\alpha$}
\put(80,90){\color{blue} $\gamma$}
\put(55,5){\color{blue} $\beta$}
\put(228,85){\color{red} $A_3$}
\put(263,118){\color{red} $A_2$}
\put(260,95){\color{red} $A_1$}
\put(305,118){\color{red} $A_3$}
\put(338,85){\color{red} $A_2$}
\put(310,90){\color{red} $A_1$}
\put(228,40){\color{red} $A_2$}
\put(263,10){\color{red} $A_3$}
\put(255,40){\color{red} $A_1$}
\put(305,10){\color{red} $A_2$}
\put(335,40){\color{red} $A_3$}
\put(310,38){\color{red} $A_1$}
\put(265,102){\color{blue} $\gamma$}
\put(250,85){\color{blue} $\alpha$}
\put(245,100){\color{blue} $\beta$}
\put(325,85){\color{blue} $\gamma$}
\put(305,103){\color{blue} $\alpha$}
\put(325,105){\color{blue} $\beta$}
\put(248,45){\color{blue} $\gamma$}
\put(268,28){\color{blue} $\alpha$}
\put(248,20){\color{blue} $\beta$}
\put(305,28){\color{blue} $\gamma$}
\put(325,45){\color{blue} $\alpha$}
\put(330,30){\color{blue} $\beta$}
\put(40,0){(i)}
\put(280,0){(ii)}
\end{picture}
\end{minipage}
\caption{The annulus $E^*$ in Case 2 when $S_1$ is (i) an normal M\"{o}bius-band region; (ii) an annulus region. }
\label{fig:Case_2}
\end{center}
\end{figure}
\item{Case $3$:}  $E^*\neq A'_{1}$ and  $S_1$ is an normal annulus region, see Figure \ref{fig:Case_3}.
\begin{figure}[h]
\begin{center}
\begin{minipage}{6cm}
\includegraphics[width=\textwidth]{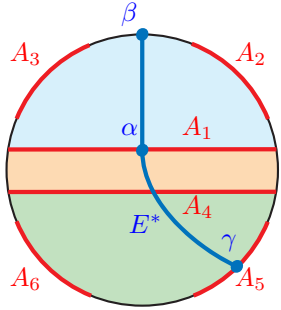}

\begin{picture}(400,0)(0,0)
\put(100,77){\color{red} $A_1$}
\put(100,47){\color{red} $A_4$}
\put(35,105){\color{red} $A_3$}
\put(35,20){\color{red} $A_6$}
\put(120,20){\color{red} $A_5$}
\put(120,105){\color{red} $A_2$}
\put(80,40){\color{blue} $E^*$}
\put(77,77){\color{blue} $\alpha$}
\put(77,120){\color{blue} $\beta$}
\put(115,35){\color{blue} $\gamma$}
\end{picture}
\end{minipage}
\caption{The annulus $E^*$ in Case 3.}
\label{fig:Case_3}
\end{center}
\end{figure}
\end{description}

\begin{claim}\label{claim:case1}
In Case $1$, we have $E(\mathcal{A}_{X'}|\mathcal{A}_{X^{(1)}})<E(\mathcal{A}_{X'}|\mathcal{A}_{X})$.
\end{claim}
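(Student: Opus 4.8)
The plan is to show that, in Case 1, the IH-move along $S_1$ deletes the annulus $A'_{1}=E^*$ from the collection of annuli of $\mathcal{A}_{X'}$ that meet the characteristic system, without turning any previously disjoint annulus into an intersecting one. Since $E(\mathcal{A}_{X'}|\mathcal{A}_X)$ is twice the number of annuli of $\mathcal{A}_{X'}$ meeting $\bigcup_{A\in\mathcal{A}_X}A$, and the same formula holds for $\mathcal{A}_{X^{(1)}}$, this yields the strict inequality, with the value of $E$ dropping by exactly $2$. Concretely, writing $\mathcal{I}(\mathcal{B})$ for the set of annuli $A'\in\mathcal{A}_{X'}$ with $A'\cap(\bigcup_{A\in\mathcal{B}}A)\neq\emptyset$, it suffices to prove (a) $A'_{1}\in\mathcal{I}(\mathcal{A}_X)\setminus\mathcal{I}(\mathcal{A}_{X^{(1)}})$ and (b) $\mathcal{I}(\mathcal{A}_{X^{(1)}})\subseteq\mathcal{I}(\mathcal{A}_X)$; together these force $|\mathcal{I}(\mathcal{A}_{X^{(1)}})|<|\mathcal{I}(\mathcal{A}_X)|$.

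For (a), the defining feature of Case 1 is that $E^*=A'_{1}$, so the \emph{entire} annulus $A'_{1}$ lies in the solid torus $N_{12}$, with both boundary circles $\beta$ and $\gamma$ on $\partial N\cap\partial N_{12}$. Recall that $\mathcal{A}_{X^{(1)}}$ is obtained from $\mathcal{A}_{X}$ by replacing $A_1$ (respectively $\{A_1,A_4\}$) with $A_1^{(1)}$ (respectively $\{A_1^{(1)},A_4^{(1)}\}$) inside $N_{12}$, as depicted in Figures \ref{fig:IH-move_mobius_band} and \ref{fig:IH-move_annulus}. Comparing these with the three configurations of $A'_{1}$ in Figure \ref{fig:Case_1}, I would check directly in each of (i)--(iii) that the relocated annuli $A_1^{(1)}$ (and $A_4^{(1)}$) can be chosen disjoint from $A'_{1}$: after the move the core of $A_1^{(1)}$ runs across $N_{12}$ in the direction that frees $A'_{1}$, so that $A'_{1}$ becomes a boundary-parallel annulus in $N_{12}$ meeting $\mathcal{A}_{X^{(1)}}$ in no essential loop, and hence may be isotoped into $\bigcup_{j}N_{m+j}$. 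Since $A'_{1}$ met $A_1$ along $\alpha$, it lay in $\mathcal{I}(\mathcal{A}_X)$, which establishes (a).

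For (b), I would use that the IH-move is supported in $N_{12}$: the annuli $A_2,A_3$ and every annulus of $\mathcal{A}_{X}$ outside $N_{12}$ are left unchanged, while $A_1^{(1)}$ and $A_4^{(1)}$ are isotopic in $N_{12}$ rel $\partial N$ to $A_1$ and $A_4$ pushed across $N_{m+1}$ (and $N_2$). Hence an annulus of $\mathcal{A}_{X'}$ meeting $\mathcal{A}_{X^{(1)}}$ meets either an unchanged annulus---in which case it already met $\mathcal{A}_X$---or one of $A_1^{(1)},A_4^{(1)}$, which forces it to meet $A_1$ or $A_4$, hence $\mathcal{A}_X$ as well. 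This gives $\mathcal{I}(\mathcal{A}_{X^{(1)}})\subseteq\mathcal{I}(\mathcal{A}_X)$, and combined with (a) completes the argument.

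I expect the main obstacle to be step (a): verifying that $A'_{1}$ can genuinely be pushed off the relocated characteristic annulus in \emph{every} one of the configurations of Figure \ref{fig:Case_1}, while confirming that we remain in minimal position, so that no phantom intersection reappears when we pass back to a minimal representative of the intersection of $\mathcal{A}_{X'}$ with $\mathcal{A}_{X^{(1)}}$. Step (b) is then a routine consequence of the locality of the move in $N_{12}$.
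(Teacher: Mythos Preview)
Your overall strategy matches the paper's: show that after the IH-move the annulus $A'_1$ no longer contributes to the count, while no new contributing annulus appears. However, you miss the single observation that makes this immediate. The paper notes that in Case~1 the new annulus $A_1^{(1)}$ (and $A_4^{(1)}$ when $S_1$ is an annulus region) is \emph{isotopic to $E^*=A'_1$ in $N$}. Once you see this, both of your steps (a) and (b) follow at once: since $A_1^{(1)}$ is parallel to a member of $\mathcal{A}_{X'}$, after putting $\mathcal{A}_{X'}$ and $\mathcal{A}_{X^{(1)}}$ in minimal position $A_1^{(1)}$ is disjoint from \emph{every} annulus of $\mathcal{A}_{X'}$, not just from $A'_1$. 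Hence $\mathcal{I}(\mathcal{A}_{X^{(1)}})=\mathcal{I}(\mathcal{A}_X\setminus\{A_1,A_4\})\subseteq\mathcal{I}(\mathcal{A}_X)$, and $A'_1$ drops out because its only intersections with $\mathcal{A}_X$ were along $A_1$ (and $A_4$).

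Your justification of (b) as written has a gap. You claim that $A_1^{(1)}$ and $A_4^{(1)}$ are ``isotopic in $N_{12}$ rel $\partial N$'' to $A_1$ and $A_4$; this is false, since $\partial A_1^{(1)}$ and $\partial A_1$ lie on different components of $\partial N\cap\partial N_{12}$ (compare Figures~\ref{fig:IH-move_mobius_band} and~\ref{fig:IH-move_annulus}). Consequently the inference ``meeting $A_1^{(1)}$ forces meeting $A_1$ or $A_4$'' is not supported by that isotopy. The correct reason no annulus of $\mathcal{A}_{X'}$ meets $A_1^{(1)}$ is the parallelism $A_1^{(1)}\simeq A'_1$ noted above, which also resolves your stated worry about phantom intersections in (a). With that one observation inserted, your argument becomes the paper's.
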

\begin{proof}
The annuli $A_1^{(1)}$ (and $A_4^{(1)}$ as well if $S_1$ is an annulus region) are isotopic to $E^*=A'_1$ in $N$.
Then  $End(\mathcal{A}_{X'}|\mathcal{A}_{X^{(1)}})\subset End(\mathcal{A}_{X'}|\mathcal{A}_{X})$ 
and $E\in End(\mathcal{A}_{X'}|\mathcal{A}_{X})-End(\mathcal{A}_{X'}|\mathcal{A}_{X^{(1)}})$.
Hence $E(\mathcal{A}_{X'}|\mathcal{A}_{X^{(1)}})<E(\mathcal{A}_{X'}|\mathcal{A}_{X})$.
\end{proof}

For Case $2$, since $A_2$ and $A_3$ are the only annuli of $\mathcal{A}_X$ lying in $\partial N_{12}$, 
the loop $\gamma$ lies in $A_2$ or $A_3$, say, $A_2$.
\begin{claim}\label{claim:case2}
In Case $2$, we have 
$E(\mathcal{A}_{X'}|\mathcal{A}_{X^{(1)}})\le E(\mathcal{A}_{X'}|\mathcal{A}_{X})$,  
$I(\mathcal{A}_{X'},\{A_1^{(1)}\})< I(\mathcal{A}_{X'},\{A_2\})$ 
and $I(\mathcal{A}_{X'},\mathcal{A}_{X^{(1)}})< I(\mathcal{A}_{X'},\mathcal{A}_{X})$ in Case $2$.
\end{claim}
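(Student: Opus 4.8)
The plan is to establish the three inequalities in order, deriving the last from the first two together with Claim \ref{claim:intersectN_1}. Throughout I abbreviate $I(\mathcal{A}_{X'},\{A\})$ for the number of intersection loops against a single annulus $A$. The governing observation is that the IH-move along $S_1$ is supported in $N_{12}$: it replaces $A_1$ (and, in the quasi-normal annulus subcase, also $A_4$) by $A_1^{(1)}$ (and $A_4^{(1)}$), leaves every annulus of $\mathcal{A}_X$ outside $N_{12}$ untouched, and produces new annuli lying in $N_{12}$ disjoint from $A_2\cup A_3$.

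For the inequality $E(\mathcal{A}_{X'}|\mathcal{A}_{X^{(1)}})\le E(\mathcal{A}_{X'}|\mathcal{A}_{X})$ I would argue that the move creates no new intersecting annuli. By the reduction made just before Claim \ref{claim:end}, every annulus of $\mathcal{A}_{X'}$ disjoint from $\mathcal{A}_X$ has already been pushed into $\bigcup_j N_{m+j}$; since $\mathcal{A}_{X^{(1)}}$ agrees with $\mathcal{A}_X$ outside $N_{12}$ and the new annuli avoid $A_2\cup A_3$, any such annulus can be kept disjoint from $\mathcal{A}_{X^{(1)}}$ as well. Hence the set of annuli of $\mathcal{A}_{X'}$ meeting the characteristic system does not grow, and as $E(\cdot)$ is twice this number, the inequality follows.

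The heart of the argument is $I(\mathcal{A}_{X'},\{A_1^{(1)}\})<I(\mathcal{A}_{X'},\{A_2\})$, and here the loop $\gamma\subset A_2$ is decisive. Since $A_1^{(1)}$ is a spanning annulus of the solid torus $N_{12}$ disjoint from $A_2\cup A_3$, I would isotope it to run parallel to $A_2$, pushed slightly into $N_{12}$, except along $E^*$ where it is capped off following $E^*$ from $\beta$ to $\gamma$, as suggested by the two panels of Figure \ref{fig:Case_2}. Because $E^*$ is a subannulus of $A'_1$, which by minimality is disjoint from the other annuli of $\mathcal{A}_{X'}$, the capping region contributes no intersections; thus each loop of $(\bigcup_{A'\in\mathcal{A}_{X'}}A')\cap A_1^{(1)}$ is the push-off of a loop of $(\bigcup_{A'\in\mathcal{A}_{X'}}A')\cap A_2$, this assignment is injective, and the honest intersection $\gamma$ of $A'_1$ with $A_2$ is not in its image since it has been routed into the cap. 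This gives $I(\mathcal{A}_{X'},\{A_1^{(1)}\})\le I(\mathcal{A}_{X'},\{A_2\})-1<I(\mathcal{A}_{X'},\{A_2\})$. The normal M\"obius-band and quasi-normal annulus subcases are run separately along panels (i) and (ii) of Figure \ref{fig:Case_2}.

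Finally, the third inequality is arithmetic. In the M\"obius-band subcase $I(\mathcal{A}_{X'},\mathcal{A}_{X^{(1)}})=I(\mathcal{A}_{X'},\mathcal{A}_{X})-I(\mathcal{A}_{X'},\{A_1\})+I(\mathcal{A}_{X'},\{A_1^{(1)}\})$, and combining the second inequality with Claim \ref{claim:intersectN_1} via $I(\mathcal{A}_{X'},\{A_1\})>I(\mathcal{A}_{X'},\{A_2\})+I(\mathcal{A}_{X'},\{A_3\})\ge I(\mathcal{A}_{X'},\{A_2\})>I(\mathcal{A}_{X'},\{A_1^{(1)}\})$ yields the strict drop. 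In the quasi-normal annulus subcase one must also handle $A_4^{(1)}$; here I would use that $A_1,A_4$ (resp.\ $A_1^{(1)},A_4^{(1)}$) are parallel and cobound a product region meeting $\mathcal{A}_{X'}$ only in spanning annuli, by Claim \ref{claim:intersection consists of essential loops} and minimality, so that $I(\mathcal{A}_{X'},\{A_1\})=I(\mathcal{A}_{X'},\{A_4\})$ and $I(\mathcal{A}_{X'},\{A_1^{(1)}\})=I(\mathcal{A}_{X'},\{A_4^{(1)}\})$; the annulus subcase then reduces to the same comparison. I expect the genuine obstacle to lie in this penultimate step: making the phrase ``capped off following $E^*$'' precise, verifying that exactly one loop ($\gamma$) is lost and none is gained, and carrying the non-orientable M\"obius gluing and the parallel pair $A_1^{(1)},A_4^{(1)}$ through the count without an off-by-one error.
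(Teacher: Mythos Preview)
Your proposal is correct and follows essentially the same approach as the paper: the key step is your ``capped off following $E^*$'' description of $A_1^{(1)}$, which the paper phrases as the isotopy $A_1^{(1)}\simeq E^*\cup F$ with $F$ one of the two subannuli of $A_2$ cut off by $\gamma$, and from which all three inequalities follow just as you argue. The only organizational difference is that the paper derives the $E$-inequality directly from this isotopy (any $A'\in\mathcal{A}_{X'}$ meeting $A_1^{(1)}$ must meet $F\subset A_2$ or $E^*\subset A'_1$, hence already met $\mathcal{A}_X$), rather than via the separate ``pushed into $\bigcup_j N_{m+j}$'' argument you give first.
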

\begin{proof}
The annulus $A_2$ is divided into two annuli by $\gamma$. 
One of the annuli, denoted by $F$, makes the annulus $E^*\cup F$ isotopic to $A_{1}^{(1)}$ 
 in $N$.
Then each annulus of $\mathcal{A}_{X'}$ that intersects annuli of $\mathcal{A}_{X^{(1)}}$ 
must intersect annuli of $\mathcal{A}_{X}$. 
This implies that  $E(\mathcal{A}_{X'}|\mathcal{A}_{X^{(1)}})\le E(\mathcal{A}_{X'}|\mathcal{A}_{X})$. 
Since $F$ is contained in $A_2$, $I(\mathcal{A}_{X'},\{A_1^{(1)}\})< I(\mathcal{A}_{X'},\{A_2\})$.
By Claim \ref{claim:intersectN_1}, we have $
I(\mathcal{A}_{X'},\{A_1^{(1)}\})<I(\mathcal{A}_{X'},\{A_1\})
$. 
If $S_1$ is an annulus region,  $I(\mathcal{A}_{X'},\{A_4^{(1)}\})<I(\mathcal{A}_{X'},\{A_4\})$ 
since the two annuli $A_1$ and $A_4$ (resp. $A_1^{(1)}$ and $A_4^{(1)}$) are parallel. 
Hence we have $I(\mathcal{A}_{X'},\mathcal{A}_{X^{(1)}})< I(\mathcal{A}_{X'},\mathcal{A}_{X})$.
\end{proof}

For Case $3$, let $A_5,A_6$ be the annuli of $\mathcal{A}_X$ lying in $\partial N_2$ other than $A_4$ so that $\gamma\subset A_5$.
By the same argument in the proof of Claim \ref{claim:case2}, we have the following.
\begin{claim}\label{claim:case3}
In Case $3$, we have $E(\mathcal{A}_{X'}|\mathcal{A}_{X^{(1)}})\le E(\mathcal{A}_{X'}|\mathcal{A}_{X})$ and
$I(\mathcal{A}_{X'},\{A_1^{(1)}\})=I(\mathcal{A}_{X'},\{A_4^{(1)}\})< I(\mathcal{A}_{X'},\{A_5\})$.
\end{claim}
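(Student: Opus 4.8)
The plan is to transplant the argument of Claim~\ref{claim:case2} to the present configuration, with the annulus $A_5\subset\partial N_2$ now playing the role that $A_2\subset\partial N_1$ played there. By the choice fixed just before the statement, the loop $\gamma=\partial E^*-\beta$ lies in $A_5$, so $\gamma$ splits $A_5$ into two sub-annuli. The first step is to single out the sub-annulus $F\subset A_5$ for which the concatenation $E^*\cup F$ is isotopic in $N$ to the replacement annulus $A_1^{(1)}$. This isotopy is the geometric core of the proof: one reads off from Figures~\ref{fig:IH-move_annulus} and~\ref{fig:Case_3} that, after the IH-move along the normal annulus region $S_1$, the annulus $A_1^{(1)}$ runs inside $N_{12}$ from the $\beta$-side of $\partial N_{12}$ across to $A_5$, precisely along the path traced by $E^*$ and completed by $F$.

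Granting this isotopy, both parts of the statement follow formally, exactly as in Claim~\ref{claim:case2}. For the inequality on $E$, since $E^*\cup F$ is isotopic to $A_1^{(1)}$ and $F$ lies in the single annulus $A_5$, every annulus of $\mathcal{A}_{X'}$ meeting $\bigcup_{A\in\mathcal{A}_{X^{(1)}}}A$ must already meet $\bigcup_{A\in\mathcal{A}_{X}}A$; hence the annuli of $\mathcal{A}_{X'}$ contributing to $End(\mathcal{A}_{X'}|\mathcal{A}_{X^{(1)}})$ form a subset of those contributing to $End(\mathcal{A}_{X'}|\mathcal{A}_{X})$, giving $E(\mathcal{A}_{X'}|\mathcal{A}_{X^{(1)}})\le E(\mathcal{A}_{X'}|\mathcal{A}_{X})$. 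For the strict inequality, I would note that $E^*$, being a sub-annulus of the embedded annulus $A_1'\in\mathcal{A}_{X'}$, can be pushed off $A_1'$ so as to be disjoint from $\bigcup\mathcal{A}_{X'}$ and thus contributes no intersection loops; after the isotopy the loops of $\mathcal{A}_{X'}$ meeting $A_1^{(1)}$ therefore correspond to those meeting the proper sub-annulus $F$ of $A_5$, whence $I(\mathcal{A}_{X'},\{A_1^{(1)}\})<I(\mathcal{A}_{X'},\{A_5\})$.

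It remains to establish the equality $I(\mathcal{A}_{X'},\{A_1^{(1)}\})=I(\mathcal{A}_{X'},\{A_4^{(1)}\})$. Since $S_1$ is an annulus region, the IH-move replaces $\{A_1,A_4\}$ by the parallel pair $\{A_1^{(1)},A_4^{(1)}\}$ in $N_{12}$ (Figure~\ref{fig:IH-move_annulus}), and these two annuli cobound a product region $A_1^{(1)}\times[0,1]$. Because the annuli of $\mathcal{A}_{X'}$ are in minimal position, a standard parallelism argument using this product region puts the intersection loops on $A_1^{(1)}$ and on $A_4^{(1)}$ in bijection, which forces the claimed equality and completes the proof.

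The step I expect to be the main obstacle is establishing the isotopy $E^*\cup F\simeq A_1^{(1)}$, and in particular confirming that, in the normal annulus case, the loop $\gamma$ exits $N_{12}$ through $\partial N_2$ and so lies in $A_5\cup A_6$, rather than returning to $\partial N_1$ through $A_2\cup A_3$ as it does in Case~2. This is precisely where the type of $S_1$ enters: the normality of the second branch locus $B_2$ (which is what distinguishes the normal annulus region of Case~3 from the quasi-normal one of Case~2) controls how the component $E^*$ of $A_1'\cap N_{12}$ threads past the interior gluing annuli $A_1$ and $A_4$, and reading this off requires the case analysis recorded in Figures~\ref{fig:IH-move_annulus} and~\ref{fig:Case_3}. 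Once the correct sub-annulus $F\subset A_5$ and the target annulus $A_1^{(1)}$ have been identified, the remaining bookkeeping is identical to that of Claim~\ref{claim:case2}.
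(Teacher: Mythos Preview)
Your proposal is correct and matches the paper's approach exactly: the paper's entire proof of this claim is the single sentence ``By the same argument in the proof of Claim~\ref{claim:case2}, we have the following,'' and what you have written is precisely that argument transplanted with $A_5$ playing the role of $A_2$, together with the parallelism of $A_1^{(1)}$ and $A_4^{(1)}$ for the equality. Regarding your concern in the last paragraph, note that the hypothesis $\gamma\subset A_5$ is fixed in the setup immediately preceding the claim (and is justified by minimality: the piece of $E^*$ in $N_{m+1}$ cannot return to $A_1$, and the piece in $N_2$ cannot return to $A_4$, since either would produce a boundary-parallel sub-annulus removable by isotopy), so within the proof of the claim itself you may simply take it as given.
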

%

\begin{claim}\label{claim:reducing complexity}
The complexity $I(\mathcal{A}_{X'},\mathcal{A}_{X})$ decreases after a finite sequence of IH-moves for $\mathcal{A}_{X'}$. 
\end{claim}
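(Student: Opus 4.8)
The plan is to treat this claim as the inductive step for Theorem \ref{thm:IH}: from the configuration already fixed (the outermost end $E \subset A'_1$ sitting in the normal solid torus $N_1$ by Claim \ref{claim:end}, and the region $S_1$ with $A_1 \subset N_1 \cap N_{m+1}$), I would produce a finite sequence of IH-moves yielding a maximally spread surface $X^{(k)}$ with $E(\mathcal{A}_{X'}|\mathcal{A}_{X^{(k)}}) \le E(\mathcal{A}_{X'}|\mathcal{A}_{X})$ and $I(\mathcal{A}_{X'},\mathcal{A}_{X^{(k)}}) < I(\mathcal{A}_{X'},\mathcal{A}_{X})$, so that the lexicographic complexity $(E,I)$ strictly drops; since $I(\mathcal{A}_{X'},\mathcal{A}_{X})=0$ forces $X$ and $X'$ to be isotopic, this closes the induction and, through the reduction of Theorem \ref{thm:IXXI} to Theorem \ref{thm:IH}, also proves the main theorem. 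By Claim \ref{claim:notorus} the region $S_1$ is a normal M\"{o}bius-band region or a (normal or quasi-normal) annulus region, so a single IH-move along $S_1$ falls into one of Cases $1$, $2$, $3$. Cases $1$ and $2$ need no iteration: Claim \ref{claim:case1} already strictly lowers the first coordinate $E$, and Claim \ref{claim:case2} keeps $E$ non-increasing while strictly lowering $I$. Thus the entire difficulty is concentrated in Case $3$, which is exactly the situation where one move alone may fail to lower $(E,I)$ and a genuine \emph{sequence} is needed.

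In Case $3$ I would iterate the IH-move and follow how the chosen end travels. Recall that there the component $E^*$ of $A'_1 \cap N_{12}$ runs from $\beta \subset \partial N$ across the internal annuli $A_1$ and $A_4$ to the loop $\gamma \subset A_5 \subset \partial N_2$, and that after the move the parallel replacements $A_1^{(1)},A_4^{(1)}$ are isotopic (through $E^* \cup F$ with $F \subset A_5$) to an annulus cut off along $\gamma$; Claim \ref{claim:case3} then gives $E(\mathcal{A}_{X'}|\mathcal{A}_{X^{(1)}}) \le E(\mathcal{A}_{X'}|\mathcal{A}_{X})$ together with $I(\mathcal{A}_{X'},\{A_1^{(1)}\}) = I(\mathcal{A}_{X'},\{A_4^{(1)}\}) < I(\mathcal{A}_{X'},\{A_5\})$. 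The observation I would record is that the two intersection loops of $A'_1$ lying on the internal annuli $A_1,A_4$ disappear, and, after re-minimizing the intersection, the new outermost end of $A'_1$ with respect to $\mathcal{A}_{X^{(1)}}$ is the one cut off along $\gamma$, hence it sits in the adjacent normal solid torus $N_2$ with its intersection loop now on $A_5$. In other words Case-$3$ moves make the end \emph{march} along $A'_1$, equivalently along a chain $N_1 \to N_2 \to \cdots$ of normal solid tori joined by annulus regions, and at each step we are once more in one of Cases $1$, $2$, $3$. As soon as a step lands in Case $1$ or Case $2$ we conclude by the first paragraph, so it remains to prove that the chain of Case-$3$ steps cannot continue forever.

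The main obstacle is precisely this termination, and my plan is to exhibit a monovariant for the march. The fixed annulus $A'_1$ meets $\bigcup_{A\in\mathcal{A}_X}A$ in finitely many loops, all essential by Claim \ref{claim:intersection consists of essential loops}, cutting $A'_1$ into finitely many sub-annuli; I would argue that each Case-$3$ move strictly decreases the number of these loops on $A'_1$, erasing the two carried by $E^*$ on $A_1$ and $A_4$ while, because $A_1^{(1)},A_4^{(1)}$ are isotopic into $E^*\cup F$ and the system is kept minimal, creating no compensating loop on $A'_1$. As this count is a nonnegative integer that never increases, the march must halt after finitely many steps, necessarily in Case $1$ (when $E^*=A'_1$, so that $A'_1$ has been freed from the internal annuli) or in Case $2$. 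The strict inequality $I(\mathcal{A}_{X'},\{A_1^{(1)}\}) < I(\mathcal{A}_{X'},\{A_5\})$ of Claim \ref{claim:case3}, reinforced by the imbalance $I(\mathcal{A}_{X'},\{A_1\}) > I(\mathcal{A}_{X'},\{A_2\}) + I(\mathcal{A}_{X'},\{A_3\})$ of Claim \ref{claim:intersectN_1}, is what guarantees that the intersections are genuinely pushed forward and never recycled, so the march cannot cycle back to an already-simplified solid torus. The delicate point I expect to have to justify is exactly that the replacement annuli create no new intersection loop with $A'_1$, nor with the remaining annuli of $\mathcal{A}_{X'}$ in a way that would raise $E$; here I would appeal to minimality of the intersection and to the explicit model of the IH-move on the characteristic annulus system depicted in Figures \ref{fig:IH-move_mobius_band} and \ref{fig:IH-move_annulus}. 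Once termination is in hand, the terminal Case-$1$ or Case-$2$ step delivers the required strict decrease of $(E,I)$, completing the induction.
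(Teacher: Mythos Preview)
Your overall architecture---iterate IH-moves always following the \emph{same} outermost piece $E^*\subset A'_1$, reduce to showing the chain of Case-$3$ steps terminates, and then finish with a Case-$1$ or Case-$2$ step---is exactly the paper's. The gap is in your termination argument. The monovariant you propose, the total number of intersection loops of $A'_1$ with the current system $\mathcal{A}_{X^{(k)}}$, need not strictly decrease after a Case-$3$ move. You correctly erase the two loops on $A_1$ and $A_4$ lying inside $E^*$, but the new annuli $A_1^{(1)},A_4^{(1)}$ are only isotopic to $E^*\cup F$ with $F$ a \emph{half of $A_5$}, and $A'_1$ may meet $\operatorname{int}(F)$ in many loops (these lie beyond $\gamma$ on $A'_1$ and nothing rules them out). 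After minimizing, each such loop becomes a genuine intersection of $A'_1$ with $A_1^{(1)}$ and with $A_4^{(1)}$. Neither Claim~\ref{claim:intersectN_1} nor Claim~\ref{claim:case3} gives $|A'_1\cap F|<|A'_1\cap A_1|$; the former compares $A_1$ with $A_2,A_3$, and the latter compares $A_1^{(1)}$ with $A_5$, not with $A_1$. So the step ``creating no compensating loop on $A'_1$'' is unjustified, and in general false.

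The paper handles termination with a different, global monovariant, following Luo~\cite{Luo97}. One records the $n$-tuple $(a_1,\dots,a_n)$ with $a_j$ essentially $I(\mathcal{A}_{X'},\mathcal{A}_{S_j})$ (halved for annulus regions). A Case-$2$ or Case-$3$ IH-move replaces a single coordinate $a_{i_{k-1}}$ by some $a_{i_{k-1}}^{(k)}\le a_{i_k}^{(k-1)}-1$ for another index $i_k$; this inequality \emph{is} exactly what Claims~\ref{claim:case2} and \ref{claim:case3} provide, since $I(\mathcal{A}_{X'},\{A_1^{(1)}\})<I(\mathcal{A}_{X'},\{A_5\})$. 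No single quantity is monotone, but a short telescoping estimate then bounds $\sum_j a_j^{(k)}$ above by $\sum_j a_j+\max_j a_j-k-1$, so the process halts in at most $\sum_j a_j+\max_j a_j$ steps. The point is that the available inequality compares the \emph{new} coordinate with a \emph{different old} coordinate, which is why a single-annulus count like yours cannot absorb it; one needs the vector bookkeeping.
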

\begin{proof}
By Claims \ref{claim:case1} and \ref{claim:case2}, the complexity $I(\mathcal{A}_{X'},\mathcal{A}_{X})$ decreases 
after performing the IH-move along $S_1$, so we are done. 
The complexity $I(\mathcal{A}_{X'},\mathcal{A}_{X})$ may increase after the IH-move along $S_1$ 
in Case $3$.
By the same argument as in Luo \cite{Luo97}, however, we conclude that the complexity $(E(\mathcal{A}_{X'}|\mathcal{A}_{X}),I(\mathcal{A}_{X'},\mathcal{A}_{X}))$ is reduced after a finite number of IH-moves as follows. 
The annulus $E^*$ in Case $3$ as well as in Case $2$ is an element of 
$End(\mathcal{A}_{X'}|\mathcal{A}_{X^{(1)}})$. 
We repeat the same argument using the annulus $E^*$ among annuli of $End(\mathcal{A}_{X'}|\mathcal{A}_{X^{(1)}})$.  
If Case $1$ occurs for $E^*$, the complexity decreases, so we are done. 
Therefore, it remains to show that after repeating the same process (IH-moves) finitely many times, we finally 
get an outer most annulus of Case $1$. 
To prove this, let us exam the change in the $n$-tuple $(a_1,\dots,a_n)$ of non-negative integers 
\begin{eqnarray*}
a_j= \left\{ 
\begin{array}{ll}
\frac{1}{2}I(\mathcal{A}_{X'},\mathcal{A}_{S_j}) & \mbox{if $S_j$ is an annulus region},\\
I(\mathcal{A}_{X'},\mathcal{A}_{S_j}) & \mbox{otherwise},
\end{array}
\right.
\end{eqnarray*}
where $\mathcal{A}_{S_j}$ is the set of all annuli lying in $\partial N_{m+j}$.
By Claims \ref{claim:case2} and \ref{claim:case3}, at the first step (i.e. the IH-move along $S_1$), 
we replace one coordinate of the $n$-tuple $(a_1,\dots,a_n)$, say $a_{i_0}$, by $a_{i_0}^{(1)}$, where $0\le a_{i_0}^{(1)}\le a_{i_1}-1$ for some $i_1\neq i_0$.
Let the new $n$-tuple thus obtained be $(a_1^{(1)},\dots,a_n^{(1)})$.
Now we replace $a_{i_1}^{(1)}$ by $a_{i_1}^{(2)}$, where $0\le a_{i_1}^{(2)}\le a_{i_2}^{(1)}-1$ for some $i_2\neq i_1$.
Suppose in the $k$-th step we obtain the $n$-tuple $(a_1^{(k)},\dots,a_n^{(k)})$, where $0\le a_{i_{k-1}}^{(k)}\le a_{i_k}^{(k-1)}-1$.
Noting that $a_{i_k}^{(k-1)} = a_{i_k}^{(k)}$, we have 
\begin{eqnarray*}
\sum_{i=1}^{n} a_i^{(k)}&=&\sum_{i=1}^{n} a_i^{(k-1)}-a_{i_{k-1}}^{(k-1)}+a_{i_k}^{(k)}\\
&\le&\sum_{i=1}^{n} a_i^{(k-1)}-a_{i_{k-1}}^{(k-1)}+a_{i_{k}}^{(k-1)}-1\\
&=&\sum_{i=1}^{n} a_i^{(k-1)}-a_{i_{k-1}}^{(k-1)}+a_{i_{k}}^{(k)}-1.
\end{eqnarray*}
By the construction, it holds $0\le a_i^{(k)}\le \max\{a_1,\ldots,a_n\}-1$ for all $k$ and $i$. 
Thus from the above inequality, it follows that 
\begin{eqnarray*}
\sum_{i=1}^{n} a_i^{(k)}
&\le& \sum_{i=1}^{n} a_i^{(k-1)}-a_{i_{k-1}}^{(k-1)}+a_{i_{k}}^{(k)}-1 \\
&\le& \sum_{i=1}^{n} a_i^{(k-2)}-a_{i_{k-2}}^{(k-2)}+a_{i_{k}}^{(k)}-2 \\
&\le& \cdots \\
&\le& \sum_{i=1}^{n} a_i-a_{i_0}+a_{i_{k}}^{(k)}-k \\
&\le& \sum_{i=1}^{n} a_i + \max\{a_1,\ldots,a_n\}-k-1.
\end{eqnarray*}

Hence after at most $(\sum_{i=1}^{n}a_i+\max\{a_1,\ldots,a_n\})$ steps, the $n$-tuple becomes the zero vector $(0,\ldots,0)$. 
This implies that the complexity decreases after performing a finite sequence of IH-moves.
\end{proof}

Claim \ref{claim:reducing complexity} 
completes the induction step and hence the proof of Theorem \ref{thm:IH}.

\section{Poly-continuous patterns and networks}
\label{sec:tricontinuous}

One background of this study is constructing a mathematical model of structures made by diblock or triblock copolymers.

Diblock copolymers produces spherical, cylindrical, lamellar and bicontinuous structures. See \cite{Matsen2012} for example. Typical examples of bicontinuous structures are Gyroid, D-surface and P-surface\cite{Squires2005}. Mathematical model of such structures are triply periodic non-compact surfaces $F$ embedded in $\mathbb R^3$
which divide $\mathbb R^3$ into two possibly disconnected submanifolds $V_1$ and $V_2$ such that $V_1\cap V_2=\partial V_1\cap \partial V_2=F$. 
A subset of $\mathbb R^3$ is called {\em triply periodic} if it is invariant by the standard $\mathbb Z^3$ action on $\mathbb R^3$.
We call such a surface a {\em bicontinuous pattern} \cite{Hyde2000}.

We will consider the case where $V_1$ and $V_2$ are open neighborhood of networks.
Here a {\em network} means an infinite graph embedded in $\mathbb R^3$.
See, for example, \cite{Hyde2000,Hyde2009}.
In this case the bicontinuous pattern is uniquely determined by networks up to isotopy.
We say such a bicontinuous pattern is {\em associated to} a network.

On the other hand, for example triblock-arm star-shaped molecules yields a tricontinuous structure \cite{Campo2017}.
One mathematical model of such tricontinuous (resp. poly-continuous) structures is a triply periodic non-compact multibranched surface (or more generally polyhedron) dividing $\mathbb R^3$ into 3 (resp. several) possibly disconnected non-compact submanifolds $V_1$, $V_2$ and $V_3$ (resp. $V_1, \cdots, V_k$). 
We assume that each $V_i$ is the open neighborhood of three (resp. several) networks in $\mathbb R^3$.
We call such a multibranched surface a {\em tricontinuous pattern} (resp. {\em poly-continuous pattern})\cite{Hyde2000,Hyde2009,Hyde2012}.

The relation between poly-continuous patterns and networks is not obvious in this case.
Two different poly-continuous patterns are associated to one network and vice versa.
Here we will give a necessary and sufficient condition for poly-cotinuous patterns to give the same network.

By considering the quotient space of the standard $\mathbb Z^3$ action, we obtain a graph in the 3-dimensional torus $T^3$ as the quotient space of the triply periodic network and a compact multibranched surface in $T^3$ as the quotient space of the triply periodic poly-continuous pattern.

{\em IX-moves and XI-moves for triply periodic poly-continuous patterns in $\mathbb R^3$} are lifts of IX-moves and XI-moves of the corresponding multibranched surfaces in $T^3$,
By applying Theorem \ref{thm:IXXI}, two multibranched surfaces in $T^3$ corresponding to two poly-continuous pattern of a given triply periodic network can be related by a finite sequence of IX-moves, XI-moves and isotopies.

\begin{thm}\label{thm:poly}
Let $X$ and $X'$ be triply periodic poly-continuous patterns in $\mathbb R^3$ associated to a triply periodic network of multiple components.
Suppose that $X$ and $X'$ have no disk region.
Then $X$ can be transformed into $X'$ by a finite sequence of $IX$-moves, $XI$-moves and isotopies.
\end{thm}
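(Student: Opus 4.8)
The plan is to descend to the quotient torus $T^3 = \mathbb{R}^3/\Z^3$, apply Theorem~\ref{thm:IXXI} there, and lift the resulting sequence back to $\mathbb{R}^3$. Passing to the quotient of the standard $\Z^3$-action sends the triply periodic patterns $X$ and $X'$ to compact multibranched surfaces $\bar X$ and $\bar X'$ in $T^3$, and the given triply periodic network to a compact graph $\bar{\mathcal N}$ in $T^3$ carrying the partition $\bar{\mathcal N} = \bar{\mathcal N}_1 \sqcup \cdots \sqcup \bar{\mathcal N}_k$ induced by the complementary regions. Since $X$ and $X'$ have no disk region, neither do $\bar X$ and $\bar X'$, so both are genuine multibranched surfaces in the orientable closed $3$-manifold $T^3$. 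As the IX- and XI-moves for triply periodic patterns are by definition the $\Z^3$-equivariant lifts of the corresponding moves for $\bar X$ and $\bar X'$, it suffices to transform $\bar X$ into $\bar X'$ by finitely many IX-moves, XI-moves and isotopies in $T^3$ and then to lift this sequence.

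The heart of the argument is to check that $\bar X$ and $\bar X'$ have isotopic regular neighborhoods in $T^3$. Let $N(\bar X)$ be a regular neighborhood of $\bar X$. The surface $\bar X$ separates $T^3$ into complementary regions $V_1, \dots, V_k$, and by hypothesis each $V_i$ is an open regular neighborhood of the network group $\bar{\mathcal N}_i$; in particular $V_i$ deformation retracts onto $\bar{\mathcal N}_i$. Removing the open collar $\mathrm{int}\,N(\bar X)$ from $T^3$ therefore leaves a disjoint union of regular neighborhoods of the $\bar{\mathcal N}_i$, namely
\begin{equation*}
T^3 \setminus \mathrm{int}\,N(\bar X) \;\cong\; \bigsqcup_{i=1}^{k} N(\bar{\mathcal N}_i).
\end{equation*}
The same identity holds for $\bar X'$ with the identical groups $\bar{\mathcal N}_i$, because $X$ and $X'$ are associated to one and the same network. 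Since regular neighborhoods of a fixed graph in $T^3$ agree up to isotopy, the right-hand sides coincide up to an isotopy of $T^3$; taking complements shows that $N(\bar X)$ is isotopic to $N(\bar X')$.

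Theorem~\ref{thm:IXXI}, applied with $M = T^3$, now furnishes a finite sequence of IX-moves, XI-moves and isotopies carrying $\bar X$ to $\bar X'$ in $T^3$. It remains to lift this sequence through the covering $\mathbb{R}^3 \to T^3$. Each IX- or XI-move of $\bar X$ lifts, by the very definition recalled above, to the corresponding IX- or XI-move of the triply periodic pattern; and each ambient isotopy of $T^3$ lifts, by the path-lifting property of the covering applied to the ambient isotopy, to a $\Z^3$-equivariant ambient isotopy of $\mathbb{R}^3$, which hence preserves triple periodicity. Concatenating the lifts gives the required finite sequence transforming $X$ into $X'$.

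I expect the main obstacle to be the neighborhood identification of the second paragraph: pinning down precisely what ``associated to a network'' means and verifying that the isotopy class of $N(\bar X)$ in $T^3$ depends only on the network together with its partition into the groups $\bar{\mathcal N}_i$. This is exactly where the two hypotheses are consumed --- the no-disk-region assumption guarantees that each region $V_i$ retracts onto an honest graph $\bar{\mathcal N}_i$ rather than collapsing to a trivial piece, and the assumption that $X$ and $X'$ are associated to the same network forces the two complements to agree. The equivariance of the lifted isotopies is a secondary technical point, but it is handled by the standard covering-space lifting argument.
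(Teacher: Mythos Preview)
Your argument is correct and follows exactly the route the paper takes: pass to the quotient $T^3$, observe that the two multibranched surfaces there have isotopic regular neighborhoods because both complements are regular neighborhoods of the \emph{same} graph, apply Theorem~\ref{thm:IXXI}, and lift the resulting sequence of moves and isotopies through the covering $\mathbb{R}^3 \to T^3$. In fact you supply more detail than the paper does on the key step---the isotopy of regular neighborhoods in $T^3$---which the paper leaves implicit; one small correction to your closing commentary is that the no-disk-region hypothesis is used only to ensure $\bar X,\bar X'$ are multibranched surfaces in the paper's sense (as you already say in the first paragraph), while the retraction of each $V_i$ onto a graph comes from the definition of ``associated to a network'' rather than from that hypothesis.
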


By applying \cite{MS95}, two networks corresponding to one triply periodic poly-continuous pattern can be related by a finite sequence of lifts of edge-contractions and vertex-expansions of quotient graphs in $T^3$.

Study of tricontinuous patterns using decomposition of $T^3$ with a multibranched surface will be discussed in the forthcoming papers \cite{inpreparation}.

\section{A short remark on minors of multibranched surfaces}

As an analogy of graph minor (e.g. \cite{D}), Matsuzaki and the third author introduced the notion of 
{\it minor} of multibranched surfaces and studied intrinsic properties of  multibranched surfaces (\cite{MO17}).
However, in that paper, the authors took into consideration only IX-moves along normal annulus regions. 
Based on Theorem \ref{thm:IXXI}, it is more natural to define the minor of multibranched surfaces as follows.
In this section, we allow the degree $d_i$ of a branch locus $B_i$ to be $1$ or $2$ as well, and we assume that a multibranched surface is {\em regular} (i.e. for each branch locus $B_i$, the wrapping number of all components of $f^{-1}(B_i)$ is a divisor of the degree $d_i$ of $B_i$.

We consider regular multibranched surfaces modulo homeomorphism. 
Let $X$ and $Y$ be regular multibranched surfaces. 
We write $X\overset{\mathrm{r}}{<} Y$ if $X$  is obtained by removing a region of $Y$. 
We write $X\overset{\mathrm{c}}{<} Y$ if $X$ is obtained by contracting an normal annulus region, a quasi-normal annulus region or an normal M\"{o}bius-band region of $Y$.  
If $X\overset{\mathrm{r}}{<} Y$ or $X\overset{\mathrm{c}}{<} Y$, we write $X < Y$.

We denote by $\mathcal{M}$ the set of all regular multibranched surfaces (modulo homeomorphism).
We define an equivalence relation $\sim$ on $\mathcal{M}$ as follows: if $X < Y$ and $Y < X$, then $X \sim Y$. An element of the quotient set $\mathcal{M}/\sim$ is called a {\it multibranched surface class} (or a multibranched surface for simplicity).
We define a partial order $\prec$ on $\mathcal{M}/\sim$ as follows. 
Let $X$, $Y \in \mathcal{M}$. 
We denote $[X] \prec [Y]$ if there exists a finite sequence $X_0, X_1, \ldots, X_{n-1}, X_n $ of multibranched surfaces such that $X_0 \sim X$, $X_n \sim Y$ and $X_0 < X_1 < \cdots < X_{n-1} < X_n$.

A multibranched surface (class) $[X]$ is called a {\it minor} of a multibranched surface (class) $[Y]$ if $[X] \prec [Y]$. 
In particular, $[X]$ is called a {\it proper minor} of $[Y]$ if $[X] \prec [Y]$ and $[Y] \not= [X]$.
A subset $\mathcal{P}$ of $\mathcal{M}/\sim$ is said to be 
{\it minor closed} if for every multibranched surface {$[X] \in \mathcal{P}$, every minor of $[X]$ belongs to $\mathcal{P}$}.
For a minor closed set $\mathcal{P}$, we define the {\it obstruction set} $\Omega(\mathcal{P})$ as follows: 
\[ \Omega(\mathcal{P})= \left\{ [X] \in \mathcal{M}/\sim \mid 
[X] \not \in \mathcal{P}, \mbox{ Every proper minor of } [X] \mbox{ belongs to } \mathcal{P}. \right\} \]

With respect to the above notion, the results stated in \cite{MO17} still hold.
For example, the set of multibranched surfaces embeddable into $S^3$, denoted by $\mathcal{P} {S^3}$, is minor closed (\cite[Proposition 5.7]{MO17}), and for a multibranched surface $X$ in Fig. 11 of \cite{MO17} or equivalently $X=X_g(p_1,p_2,\ldots,p_n)$ in \cite{EO18}, if $p={\rm gcd} \{p_1, p_2, \ldots, p_n \}$ is not $1$, then $X \in \Omega(\mathcal{P} S^3)$.
As the philosophy of graph minor theory, the obstruction set for the embeddability into a 3-manifold 
reflects the properties of the 3-manifold.  
Finally, we propose the next problem which can be regarded as a 2-dimensional version of Kuratowski's and Wagner's theorems.

\begin{prob}
Characterize the obstruction set $\Omega(\mathcal{P} S^3)$.
\end{prob}

It is known that the following multibranched surfaces belong to $\Omega(\mathcal{P} S^3)$.

\begin{itemize}
\item non-orientable closed surfaces.
\item $X_1$ in \cite[Theorem 3.2]{EMO}, where $|ad_{e_1}(l_1)|\ge 2$.
\item $X_2$ in \cite[Theorem 3.3]{EMO}.
\item $X_3$ in \cite[Theorem 3.7]{EMO}.

\item $X$ in \cite[Example 5.8]{MO17} or \cite{EO18}, where $p={\rm gcd} \{p_1, p_2, \ldots, p_n \}\ne 1$.
\item $X$ in \cite[Example 5.9]{MO17}.

\end{itemize}

\end{document}